\documentclass[a4paper,12pt]{article}
\usepackage{bm}
\usepackage{amsmath}
\usepackage{amsthm}
\usepackage{wasysym}
\usepackage{amssymb}
\usepackage{amsfonts}
\usepackage{graphicx}
\usepackage[english]{babel}
\usepackage[utf8]{inputenc}
\usepackage[T1]{fontenc}
\usepackage{mathrsfs}
\usepackage{enumerate}
\usepackage{version}
\usepackage{calc}
\usepackage{subfigure}
\usepackage{authblk}
\usepackage{comment}

 \usepackage{pstricks,pst-math,pst-xkey}
 \usepackage{float}
 \usepackage{indentfirst}
\usepackage[pagewise]{lineno} 
\bibliographystyle{plain}

\newtheorem{definition}{Definition}[section]
\newtheorem{theorem}[definition]{Theorem}
\newtheorem{lemma}[definition]{Lemma}
\newtheorem{corollary}[definition]{Corollary}
\newtheorem{proposition}[definition]{Proposition}



\setlength{\hoffset}{-1in}
\setlength{\voffset}{-1in}

\setlength{\topmargin}{1.5cm}
\setlength{\headheight}{1cm}
\setlength{\headsep}{0.5cm}
\setlength{\topskip}{0cm}

\setlength{\oddsidemargin}{2.5cm}
\setlength{\evensidemargin}{2cm}

\setlength{\footskip}{1cm}

\setlength{\textwidth}{16.5cm}
\setlength{\textheight}{24cm}

\numberwithin{equation}{section}


\newcommand{\N}{{\mathbb N}}

\newcommand{\R}{{\mathbb R}}

\newcommand{\ve}{{\varepsilon}}

\def\ve{\varepsilon}

\def\f{\varphi}

\def\d{\partial}

\def\ti{\tilde}
\def\tw{\tilde{w}}
\def\w{\omega}

\def\toW{\tilde{\overline{W}}}
\def\N{\mathbb{N}}

\def\F{\mathcal{F}}
\def\T{\mathbb{T}_L}

\def\P{\mathbb{P}}

\def\f{\varphi}
\def\Z{\mathbb{Z}}

\def\R{\mathbb{R}}
\def\E{\mathbb{E}}
\def\N{\mathbb{N}}
\def\P{\mathbb{P}}

\begin{document}

\title{Thin Film Equations with Nonlinear Deterministic and Stochastic Perturbations}

\author[1]{Oleksiy Kapustyan \thanks{kapustyanav@gmail.com}}

\author[3] {Olha Martynyuk
\thanks{o.martynyuk@chnu.edu.ua}}

\author[2]{Oleksandr Misiats\thanks{omisiats@vcu.edu}}

\author[1]{Oleksandr Stanzhytskyi \thanks{ostanzh@gmail.com}}

\affil[1]{Department of Mathematics,
Taras Shevchenko National University of Kyiv, Ukraine}

\affil[2]{Department of Mathematics, Virginia Commonwealth University,
Richmond, VA, 23284, USA}

\affil[3]{Department of Mathematics, Yuriy Fedkovych Chernivtsi National University, Chernivtsi, Ukraine}


\maketitle

\begin{abstract}
In this paper we consider stochastic thin-film equation with nonlinear drift terms, colored Gaussian Stratonovych noise, as well as nonlinear colored Wiener noise. By means of Trotter-Kato-type decomposition into deterministic and stochastic parts, we couple both of these dynamics via a discrete-in-time scheme, and establish its convergence to a non-negative weak martingale solution. 
\end{abstract}

\section{Introduction.} We consider the nonlinear stochastic thin-film  equation with nonlinear drift coefficients
\begin{equation}\label{1.1}
du = (-\partial_x(u^{2}u_{xxx})  + l(u))dt + \partial_x (u \circ dW)+ f(u)dW_1(t)
\end{equation}
for $(t,x) \in [0,T) \times \mathbb{T}_L$, on torus $\mathbb{T}_L$, where $T$ and $L$ are positive constants, and $\mathbb{T}_L$ denotes the torus on the interval $[0,L]$ with periodic boundary conditions
\[
\partial_x^i u(\cdot, 0) = \partial_x^i u(\cdot, L), \ i = 0,1,2,3
\]
and non-negative initial condition $u(0,x) = u_0(x)$. The term $\partial_x (u \circ dW)$ is a stochastic perturbation in Stratonovich form, and $f(u) dW$ is a Stochastic perturbation of Ito type. Here 
\[
W(t,x) := \sum_{k \in \mathbb{Z}} \lambda_k \Psi_k(x) \beta^k(t), \ \ W_1(t,x) := \sum_{k \in \mathbb{Z}} \gamma_k \Psi_k(x) \beta_1^k(t),
\]
where $\{\Psi_k\}$ is ONB in $H^2(\mathbb{T})$, namely
\begin{equation}\label{eq:ONB}
\Psi_k(x) := c_k 
\begin{cases}
\cos \left(\frac{2\pi k}{L} x\right), k > 0, \ x \in [0,L];\\
\frac{1}{\sqrt{2}}, k = 0, \ x \in [0,L];\\
\sin \left(\frac{2\pi k}{L} x\right), k < 0, \ x \in [0,L],
\end{cases}
\end{equation}
with
\[
c_k = \sqrt{\frac{2}{L\left(1 + \left(\frac{2 \pi k}{L}\right)^2 + \left(\frac{2 \pi k}{L}\right)^4 \right)}}.
\]
In particular, the functions $\{\Psi_k\}$ are the eigenfunctions of the Laplace operator with periodic boundary conditions, which satisfy
\begin{eqnarray}\label{1.5}
& & \partial_x \Psi_k = \frac{2\pi k}{L} \Psi_{-k}; \ \partial_x^2 \Psi_k = -\frac{4\pi^2 k^2}{L^2} \Psi_{k};\\
\nonumber & &\partial_x^3 \Psi_k = -\frac{8\pi^3 k^3}{L^3} \Psi_{-k}; \ \partial_x^4 \Psi_k = \frac{16\pi^4 k^4}{L^4} \Psi_{k}, \ k \in \Z.
\end{eqnarray}

The processes
$\beta^k$ and $ \beta_1^k$ are mutually independent standard real-valued $\mathcal{F}_t$-Wiener processes on a complete filtered probability space $(\Omega, \mathcal{F}, \{\mathcal{F}_t\}, \mathcal{P}), t \in [0,T]$, with a complete and right-continuous filtration $(\mathcal{F}_t)_{t \in [0,T]}$. The coefficients $\lambda_k\geq 0$ and $\gamma_k$ satisfy the coloring condition 
\begin{equation}\label{1.3}
    \sum_{k \in \mathbb{Z}} (\lambda_k^2 + \gamma_k^2) < \infty.
\end{equation}
Since in this work we will be studying the martingale solutions using the Skorokhod approach \cite{Skor} and its generalization \cite{Jakub}, without loss of generality we consider the probability space to be $\Omega = [0,1]$ with Lebesgue measure. Similarly to \cite{Gess}, may be re-written in Ito's form 
\begin{multline*}
du = \left(\partial_x(-u^{2}u_{xxx}) + \frac{1}{2} \sum_{k=1}^{\infty} \lambda_k^2 \Psi_k^2 \partial_x (\Psi_k u ) + l(u)\right)dt + \sum_{k=1}^{\infty} \lambda_k (\partial_x(\Psi_k u)) d \beta^k \\
+ \sum_{k=1}^{\infty} \gamma_k \Psi_k f(u) d \beta_1^k
\end{multline*}
Finally, we will assume that the nonlinear drift coefficients are $l(u) = -|u|^{r-1} u$ for some $r \geq 1$, and $f(u)$ is globally Lipschitz with $f(0) = 0$. 

The deterministic equations of type \eqref{1.1}  arise in  modeling the motion of liquid droplets of thickness $u$, spreading over the solid surface. This model follows from lubrication theory under the assumption that the dimensions in the horizontal directions are significantly larger than in the vertical (normal) one. In this regime the dynamics of the droplet is governed by the surface tension and limited by viscosity.  

In broad terms, the dynamics of the deterministic version of \eqref{1.1} is characterized by the presence of the {\it wetted regions} $u>0$. The equation is parabolic in the interior of these regions, and degenerate on their boundary. The boundary of the wetted region, in turn, has a finite speed of propagation \cite{Bernis}. Thus, one may interpret \eqref{1.1} as a fourth-order nonlinear free boundary problem inside a wetted region, which itself evolves in time. Furthermore, the classical parabolic theory is not applicable to this equation, in particular, due to the lack of comparison (maximum) principle, which is widely used in the existence theory of degenerate second order parabolic equations. Bernis and Friedman \cite{Bern} used the energy-entropy method to prove the existence of a non-negative generalized weak
solution, which was constructed
as a limit of solutions of a regularized problem. In this work, the authors' notion of a weak solution is somewhat ``weaker'' than usual, since the integral identity in the definition of the weak solution has to hold not in the entire $\mathbb{T}$, but only on the subset where $u>0$. In addition, in  \cite{Bern} the authors analyzed the support of the solution. The existence of more regular (strong or entropy) solutions was shown in \cite{DalPas}, where the authors also studied the asymptotic in $t$ behavior of the solution. Further properties of solutions, including convergence to steady states, finite propagation speed, and waiting time phenomenon, were obtained in \cite{Bert}.  
The work \cite{KapTar} studies the generalized thin film equation with a nonlinear dissipative term $l(u)$, which models the relation between nonlinear absorption and spatial injection.

In this paper we consider the thin film equation with two different stochastic perturbations, of Ito and Statonovych type. It is worth mentioning that one needs to be very careful when dealing with the effect of noise on nonlocal and/or ill-posed problems. In some cases, e.g. \cite{MisStaTop}, the presence of a even small stochastic perturbation leads to a finite time blowup while an unperturbed equation has global solution. In others \cite{Sta}, the effect of the random perturbation is exactly opposite - it may lead to the existence of a global solution while the corresponding deterministic equation has a finite time blowup. The long time behavior of stochastically perturbed evolution equations is typically described via the existence and properties of invariant measures, see, e.g. \cite{MisStaYip1}, \cite{MisStaYip2}, \cite{MisStaYip3}, \cite{MisStaSta}, \cite{MisStaHie}, \cite{MisStaKap}, \cite{MisStaCla}.

The stochastic version of thin-film equation, which takes into account the effect of random forcing when modeling the enhanced spreading of droplets, was first introduced in \cite{17}, with $l = f = 0$. In the subsequent work \cite{33} the authors additionally take the interface potential between fluid and substrate into account, which prevents the solution $u$ from becoming negative. This work describes coarsening and de-wetting phenomena. The first rigorous construction of a non-negative martingale solution of the stochastic thin film equation with Ito noise and additional interface potential was obtained in \cite{19}. In \cite{13}, the author considered a more general case of the main operator in the form $-\partial_x(u^{n}u_{xxx})$ (referred as more general {\it{mobility}}), and established the conditions for the existence of a global strong solution for this problem. 
In \cite{Gess} the authors established the existence of a nonnegative matringale solution for \eqref{1.1} by means of Trotter-Kato type decomposition. The subsequent work \cite{Gess2} establishes the existence and uniqueness of a weak solution of this problem. To this end, the authors started with the establishing the existence of a weak solution for the regularized equation by means of Galiorkin approximations, and proceeded with passing to the limit in the regularized problem.   \\

The equation \eqref{1.1}, which is considered in this paper, has significant differences from the similar models, analyzed by the other authors. In it, we introduce the nonlinear drift $l(u)$, as well as the nonlinear stochastic Ito perturbation $f(u) d W_1(t)$. Due to the presence of these two terms, the equation is no longer in divergence form. In other words, if $f \equiv 0$ and $l \equiv 0$, integrating both sides of \eqref{1.1} over $\mathbb{T}$,  one gets
$\frac{d}{dt} \int_{\mathbb{T}} u dx =0$, implying, in view of nonnegativity of $u$, the conservation of mass property (almost surely). This property plays a crucial role in \cite{Gess}. However, if either $f\neq 0$ or $l \neq 0$, this property no longer holds. Nevertheless, in this paper we obtain the estimates on the mass, which, in turn, enable us to obtain the energy estimates similar to \cite{Gess}. Our analysis starts with Trotter-Kato decomposition, which separates the deterministic dynamics from stochastic. The principal difference in our case is the presence of the nonlinear drift term in the deterministic dynamics, which prevents us from using the classic results on deterministic thin-film equations, e.g. \cite{Bern, DalPas}. Instead, we obtain the analogs of the main result in \cite{KapTar} with different boundary conditions. The stochastic dynamics, in contrast to \cite{Gess}, is nonlinear, which makes its analysis more complicated, especially in view of the lack of mass preservation. In particular, it requires a different approach to establish the non-negativity property of the solutions. 

The paper is structured as follows: in Section \ref{Sec2} we introduce the notation, list preliminary results, formulate the main results, and introduce the decomposition of the dynamics. In Section \ref{Sec3} we describe the deterministic dynamics in detail. Section \ref{Sec4} is devoted to stochastic dynamics, and finally, the main result is established in Section \ref{Sec5}.

\section{Preliminaries and main results.}\label{Sec2}
Throughout the paper, we denote
\[
Q_T = (0,T) \times \mathbb{T}_L.
\]
For $u,v \in \mathbb{T}_L$, let
\[
(u,v)_2 := \int_0^L u(x) v(x) dx, \text{ and } \|u\|_2:= \sqrt{(u,u)_2}.
\]
Next, for $Q \in \R^d$ with $\partial Q \in C^{\infty}$, for $s \in [0, \infty)$, $p \in [1, \infty)$, let $W^{s,p}(Q)$ be the regular Sobolev space for $s \in \N$, and Sobolev-Slobodeckij space for non-integer $s$. For $p=2$ we will denote $W^{s,p}(Q) = H^s(Q) = H^s$. If $X$ is a Banach space, the space $C^{k+\alpha}(Q;X)$ is the space of $k$ times differentiable functions $Q \to X$, whose $k$-th derivatives are Holder-continous with exponent $\alpha \in (0,1)$ on compact subsets of $Q$. We also denote $C^{k-}(Q;X)$ to be the space of $k-1$-times differentiable functions, whose $k-1$-st derivatives are Lipschitz continuous. The space $BC^{0}(Q,X)$ is the set of bounded continuous functions. The pairings $\langle \cdot, \cdot \rangle$ is the $H^{-1}(\mathbb{T}_L) - H^{1}(\mathbb{T}_L)$ pairing in $L^2(\mathbb{T}_L)$, and $\langle \langle \cdot, \cdot \rangle \rangle$ stands for the pairing between $L^2(\mathbb{T}_L)$ and $H^2(\mathbb{T}_L)$ in $H^1(\mathbb{T}_L)$. We denote $H^1_w(\mathbb{T}_L)$ to be the space of  $H^1(\mathbb{T}_L)$ functions endowed with the weak topology, induced by $\| \cdot \|_{1,2}$. The space $\dot{H}^1(\T)$ is the Sobolev space of $H^1(\T)$ functions endowed with the norm $\|\d_x u\|_{L^2(\T)}$. In this space the functions which differ by an additive constant are the same. 
Denote $L_2(U;H)$ to be the space of Hilbert-Schmidt operators from $U$ to $H$. In this case, for $A \in L_2(U;H)$ we have
\[
\|A\|^2_{L_2(U;H)}: = \sum_{n \geq 1} \|A e_n\|^2_H,
\]
where $\{e_n\}$ is any orthonormal basis in $U$. The symbol $\langle \langle \cdot \rangle \rangle_t$ denotes the quadratic variation process.
For  $u: Q_T \to \R$ we denote 
\[
P_T := \{(t,x) \in \bar{Q}_T, u(t,x)>0\}.
\]
Consider a triple consisting of a filtered probability space $([0,1], \tilde{\mathcal{F}}, \tilde{\mathcal{F}}_{t \in [0,T]}, P)$, where $\tilde{\mathcal{F}}_{t \in [0,T]}$ is a complete right continuous filtration.
\begin{definition}\label{Def2.1}
Let $u_0 \in H^1(\mathbb{T}_L)$. An  $\tilde{\mathcal{F}}_{t}$ adapted bounded continuous $H^1_w(\mathbb{T}_L)$ - valued process $\tilde{u}$ on $[0,T]$, such that the distributional derivative $\partial_x^3 \tilde{u}$ is $\ti{\F}_t$ - adapted with $\partial_x^3 \tilde{u} \in L^2(\{\ti{u}>r\})$ for any $r>0$, and $\ti{u}^2(\partial_x^3 \tilde{u})$ is in $L^2(\{\ti{u}>0\})$ $\P$ - almost surely, as well as mutually independent standard real-valued $(\ti{\F}_t)$-Wiener processes $\tilde{\beta}^k$, is called a {\bf martingale solution} of the equation \eqref{1.1}, if its weak formulation
\begin{align}\label{2.1}
    & & (\ti{u}(t,\cdot), \f)_2 -  (u_0, \f)_2 = \int_0^t  \int_{\{\ti{u}(s,\cdot)>0\}} \ti{u}^2(s,\cdot) (\d_x^3 \ti{u}(s,\cdot)) \d_x \f dx ds  \\
    \nonumber &  & - \frac{1}{2} \sum_{k \in \Z} \lambda_k^2 \int_0^t ( \Psi_k \d_x(\Psi_k \ti{u}(s, \cdot)), \d_x \f)_2 ds + \int_0^t (l(\ti{u}(s, \cdot)), \f)_2 ds \\
    \nonumber & & - \sum_{k \in \Z} \lambda_k \int_0^t (\Psi_k \ti{u}(s,\cdot), \d_x \f)_2 d \ti{\beta}^k(s)  + \sum_{k \in \Z} \lambda_k \int_0^t (\Psi_k f(\ti{u}(s,\cdot)), \f) d \ti{\beta}^k(s)
\end{align}
is satisfied for every $\f \in C^{\infty}(\T)$ and $t \in [0,T]$ $\P = \lambda_{[0,1]}$ almost surely.
\end{definition}
We have the following result:
\begin{theorem}\label{Thm2.1}
(existence of martingale solution) Suppose $u_0 \in H^1(\T)$ is such that $u_0 \geq 0$. Then the
equation \eqref{1.1} with has a martingale solution 
$\ti{u}(t)$ in the sense of Definition \ref{Def2.1}, which is non-negative a.s. for $t \in [0,T]$, and for any $p \geq 2$ there is $C_p>0$ such that
\[
\mathbb{E} \sup_{t \in [0,T]} \|\ti{u}(t, \cdot)\|_{H^1(\mathbb{T})}^p \leq C_p  \|u_0\|_{H^1(\mathbb{T})}^p
\]
for any $p \in [2, \infty)$, where $C<\infty$ is independent on $u_0$.
\end{theorem}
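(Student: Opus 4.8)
The plan is to implement the Trotter--Kato splitting announced in the introduction. Fix $N\in\N$, put $\t=T/N$ and $t_n=n\t$ for $n=0,\dots,N$, and construct an approximation $u^N$ on $[0,T]$ by alternately evolving the two halves of the dynamics over the subintervals $[t_n,t_{n+1}]$: starting from $u^N(t_n)$ one first runs, on the left half of the slab at doubled speed, the deterministic thin--film equation
\[
\d_t v=-\d_x(v^2 v_{xxx})+l(v),
\]
using the existence and regularity theory for this problem with periodic boundary conditions developed in Section~\ref{Sec3} (the analogue of \cite{KapTar}), and then, from the resulting endpoint, runs, on the right half of the slab at doubled speed, the pure--noise equation
\[
dw=\d_x(w\circ dW)+f(w)\,dW_1,
\]
using the stochastic theory of Section~\ref{Sec4}; one iterates this over $n=0,\dots,N-1$. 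The deterministic leg is driven by an $\F_{t_n}$--measurable datum and the stochastic leg preserves adaptedness, so $u^N$ is $\F_t$--adapted; and since both legs propagate non--negativity, $u^N(t)\ge 0$ for all $t\in[0,T]$.

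The core of the argument is a family of a priori estimates uniform in $N$. First I would control the mass. As $u^N(t)\ge 0$ we have $\|u^N(t)\|_{L^1(\T)}=\int_\T u^N\,dx=:m^N(t)$; along the deterministic legs $\tfrac{d}{dt}m^N=\int_\T l(v)\,dx\le0$ (since $l(v)v\ge0$), and along the stochastic legs $dm^N=\sum_k\g_k(\Psi_k,f(w))_2\,d\b_1^k$, so using $|f(w)|\le C|w|=Cw$, the coloring condition \eqref{1.3}, Burkholder--Davis--Gundy and Gr\"onwall one gets $\E\sup_{t\le T}\|u^N(t)\|_{L^1(\T)}^p\le C_p\|u_0\|_{L^1(\T)}^p$. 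Next comes the energy $E(u)=\tfrac12\|\d_x u\|_2^2$: along the deterministic legs
\[
\tfrac{d}{dt}E(v)=-\int_\T v^2 v_{xxx}^2\,dx+\int_\T l'(v)\,v_x^2\,dx\le-\int_\T v^2 v_{xxx}^2\,dx
\]
because $l'(v)=-r|v|^{r-1}\le0$, so the nonlinear drift is in fact favourable here; along the stochastic legs, It\^o's formula for $E(w)$ yields from the Stratonovich transport term the same favourable cancellation as in \cite{Gess}, up to terms controlled via \eqref{1.3} and the $H^2(\T)$--regularity of $\{\Psi_k\}$, while the $f(w)\,dW_1$ contribution is a martingale plus a quadratic--variation term handled by BDG and $|f(w)|\le C|w|$. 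Supplementing this with the Bernis--Friedman--type entropy estimate --- which, after replacing $u_0$ by a strictly positive regularization, gives $\E\int_0^T\!\!\int_{\{u^N>\dl\}}(u^N)^2(\d_x^3 u^N)^2\,dx\,dt\le C_\dl$ and whose drift contribution is absorbed using the $L^1$ and energy bounds --- and combining with the interpolation inequality $\|u\|_{L^2(\T)}^2\lesssim\|u\|_{L^1(\T)}(\|u\|_{L^1(\T)}+\|\d_x u\|_{L^2(\T)})$, one obtains for every $p\ge2$
\[
\E\sup_{t\le T}\|u^N(t)\|_{H^1(\T)}^p\le C_p\|u_0\|_{H^1(\T)}^p,
\]
together with a uniform bound on the increments of $u^N$ in $C^{\a}([0,T];H^{-\s}(\T))$ for suitable $\a,\s>0$ (the H\"older regularity in time coming from Kolmogorov's criterion applied on the stochastic legs).

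These bounds make the laws of $(u^N,W,W_1)$ tight on a path space built from $C([0,T];C^\g(\T))$ for some $\g\in(0,1/2)$ (Aubin--Lions--Simon, from the uniform $L^\infty(0,T;H^1)$ and $C^\a([0,T];H^{-\s})$ bounds together with $H^1(\T)\hookrightarrow\hookrightarrow C^\g(\T)$) and, for the restrictions $\d_x^3 u^N\big|_{\{u^N>\dl\}}$, a weak $L^2$ topology; since this last component is not metrizable I would invoke Jakubowski's generalization \cite{Jakub} of the Skorokhod representation theorem to obtain, on $([0,1],\ti\F,\ti\F_t,\P)$, copies $\ti u^N,\ti W^N,\ti W_1^N$ with the same joint laws such that $\ti u^N\to\ti u$, $\ti W^N\to\ti W$, $\ti W_1^N\to\ti W_1$ almost surely in the corresponding topology. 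The strong convergence in $C([0,T];C^\g(\T))$ suffices to pass to the limit in $l(\ti u^N)$, in $f(\ti u^N)$ and in the correction term $\sum_k\l_k^2(\Psi_k\,\d_x(\Psi_k\ti u^N),\d_x\f)_2$, while --- using the entropy bound to localize away from $\{\ti u=0\}$ and the weak $L^2$ convergence of $(\ti u^N)^2\,\d_x^3\ti u^N$ on each set $\{\ti u>\dl\}$, then letting $\dl\downarrow0$ --- one passes to the limit in the degenerate flux $\int_{\{\ti u>0\}}\ti u^2(\d_x^3\ti u)\,\d_x\f\,dx$. The stochastic integrals are then identified by the usual martingale--representation argument, so $\ti u$, with the $\ti\F_t$--Wiener processes $\ti\b^{k}$ associated with $\ti W,\ti W_1$, satisfies the weak formulation \eqref{2.1} of Definition~\ref{Def2.1}; note that $\ti u\in L^\infty(0,T;H^1)\cap C([0,T];C^\g(\T))$ automatically gives the required $H^1_w(\T)$--continuity. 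Non--negativity of $\ti u$ is inherited from $\ti u^N\ge0$ via the a.s.\ convergence, and the asserted moment bound follows from the uniform estimate above by Fatou's lemma and lower semicontinuity of the $H^1$--norm under weak convergence.

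I expect the two genuinely hard points to be the following. First, propagating non--negativity through the stochastic leg: unlike \cite{Gess}, the It\^o perturbation is nonlinear and mass is not conserved, so that argument is unavailable; instead one exploits that $f(0)=0$ makes $w\equiv0$ a solution of the pure--noise equation, so that $w$ cannot cross zero --- a comparison/representation argument for the Stratonovich transport part combined with this structural fact --- or, alternatively, one propagates a suitable entropy functional through the stochastic step. Second, the limit passage in the degenerate nonlinearity on the \emph{random and time--evolving} positivity set $\{\ti u>0\}$: this is precisely why Definition~\ref{Def2.1} uses the Bernis--Friedman weak formulation, and it forces one to extract, from the joint energy--entropy estimate, convergence strong enough to identify the nonlinear flux, with the convergence of the sets $\{\ti u^N>0\}$ handled by a cutoff at level $\dl$ and a diagonal argument as in \cite{Bern}.
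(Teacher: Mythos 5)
Your overall architecture is the same as the paper's: Trotter--Kato splitting into a deterministic absorption--thin-film leg and a pure-noise leg run at doubled speed on half-slabs, mass monotonicity on the deterministic legs plus a martingale/Gr\"onwall bound for the mass on the stochastic legs, the energy identity on both legs to get uniform $L^p(\Omega;L^\infty(0,T;H^1))$ bounds together with the flux bound $\int_{\{u>0\}}u^2(\d_x^3u)^2$, tightness, Jakubowski--Skorokhod representation, and identification of the limit via the martingale method. Two cosmetic differences: the paper gets the flux bound directly from the energy identity (multiplying the regularized equation by $-u_{xx}$), so no separate Bernis--Friedman entropy estimate is needed; and the time regularity/tightness is obtained through Sobolev--Slobodeckij/Besov bounds and Amann's compact embedding rather than Kolmogorov's criterion --- either route works.

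The one step where your sketch genuinely diverges and, as written, would fail is the propagation of non-negativity through the stochastic leg. You propose that, since $f(0)=0$ makes $w\equiv 0$ a solution of the pure-noise equation, a solution started from $w_0\ge 0$ ``cannot cross zero,'' by a comparison/representation argument. No comparison principle is available here: the stochastic leg is a degenerate second-order SPDE with gradient (transport) noise plus a nonlinear It\^o perturbation, and the claim that trajectories cannot cross the particular solution $0$ is exactly the order-preservation statement that has to be proved; it does not follow from $0$ being a solution (that reasoning is valid for ODEs with uniqueness, not for SPDEs in infinite dimensions). The paper's actual argument works at the level of the viscous regularization $w^\ve$: set $z=-w^\ve$, apply It\^o's formula to $\int_0^L\f_n(z)\,dx$ where $\f_n(y)=y^2\psi(ny)$ smoothly approximates $(y^+)^2$, pass to the limit in $n$ using the uniform bounds \eqref{4.46}, and observe that after the cancellations coming from \eqref{1.5} every surviving term is dominated by $C\,\E\int_0^L(z^+)^2\,dx$ --- the hypotheses $f(0)=0$ and Lipschitz continuity enter only through $|f(-z)|\le Kz^+$ on $\{z>0\}$, which is how the nonlinearity of the It\^o noise is absorbed. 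Gr\"onwall then gives $\E\int_0^L(z^+)^2\,dx=0$, i.e.\ $w^\ve\ge 0$ a.s., and non-negativity survives the limits $\ve\to 0$ and $N\to\infty$ by the a.s.\ uniform convergence. If you want your proof to close, you should replace the comparison heuristic by this positive-part It\^o estimate (or an equivalent renormalization argument); your vaguer ``alternative'' of propagating an entropy functional through the stochastic step is likewise not substantiated and is not what the paper does.
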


In order to establish this result, we will use the Trotter-Kato type decomposition of the dynamics into deterministic and stochastic parts. This method was used by a number of authors, e.g. \cite{Gess, ManZau}, in particular, to establish the existence of for SPDEs with local Lipschitz coefficients. To this end, for fixed $N\geq 1$, we equi-partition the time interval $[0,T]$ into intervals of length $\delta = \frac{T}{N+1}$. Then, for $t \in [(j-1) \delta, j\delta]$ and for an arbitrary $\f \in C^{\infty} (\mathbb{\T})$ we define the following dynamics:\\

{\bf Deterministic Dynamics (D)} We look for the function $v_N$ satisfying
\begin{multline}\label{DynamicsD}
(v_N(t, \cdot), \f)_2 - (v_N((j-1)\delta, \cdot), \f)_2 = \int_{(j-1)\delta}^t \int_{v_N(s, \cdot) > 0} v_N^2(s, \cdot) \partial_x^3 v_N^2(s, \cdot) (\partial_x \f) dx ds  \\
+ \int_{(j-1)\delta}^t (l(v_N(s,\cdot)), \f)_2 ds.
\end{multline}
\\

{\bf Stochastic Dynamics (S)} We look for the function $w_N$ satisfying
\begin{multline}\label{DynamicsS}
(w_N(t, \cdot), \f)_2 - (w_N((j-1)\delta, \cdot), \f)_2 = - \frac{1}{2}  \sum_{k \in \mathbb{Z}} \lambda_k^2 \int_{(j-1)\delta}^t (\Psi_k \partial_x(\Psi_k w_N(s, \cdot)), \partial_x \f)_2 ds  \\
 - \sum_{k \in \mathbb{Z}} \lambda_k \int_{(j-1)\delta}^t (\Psi_k w_N(s, \cdot), \partial_x \f)_2 d \beta^k(s) + \sum_{k \in \mathbb{Z}} \gamma_k \int_{(j-1)\delta}^t (\Psi_k f(w_N(s, \cdot)), \f)_2 d \beta_1^k(s).
\end{multline}

{\bf Deterministic-Stochastic Connection (DS)} We set $v_N(0):=u_0$, \\
$v_N(j \delta, \cdot) := \lim_{t \to j\delta-} w_N(t, \cdot)$ and $w_N((j-1) \delta, \cdot) := \lim_{t \to j\delta-} v_N(t, \cdot)$ a.s.\\

The dynamics in (D) is deterministic with random initial conditions, while the dynamics in (S) is purely stochastic. We note that both deterministic and stochastic dynamics are significantly different from the one considered in \cite{Gess} due to the presence of nonlinearities $l$ and $f$ in them respectively. In particular, $f$ makes the stochastic dynamics nonlinear. In order to resolve the lack of mass preservation property, our strategy is to show that, due to the dissipative nature of nonlinearity $l(u)$, the mass in deterministic dynamics is non-increasing in time, while the stochastic dynamics preserves the mass on average, i.e. $\mathbb{E} \int_{\mathbb{T}} w_N(x) dx \equiv const.$ We proceed with establishing the apriori bounds on the solutions of both deterministic and stochastic problems, which enable us to pass to the limit as $N \to \infty$, thus establishing that both $v_N$ and $w_N$ converge to the desired martingale solution for \eqref{1.1}.

\section{Deterministic dynamics.}\label{Sec3} In this section we consider the deterministic equation 
\begin{equation}\label{eqDet}
    \begin{cases}
        u_t + \d_x (u^2 \d_x^3 u) + |u|^{\lambda-1} u =0 \text{ in } (0,T) \times \T \\
        u(0,x) = u_0(x),
    \end{cases}
    \end{equation}
    with $\lambda \geq 1$, and nontrivial non-negative $u_0 \in H^1(\T)$.
\begin{definition}\label{Def3.1}
   The function $u \in C(\bar{Q}_T) \cap L^{\infty}(0,T, H^1(\T))$ is called the weak solution of \eqref{eqDet} if
   \begin{enumerate}
       \item $u \in C^{1,4}(P_T)$, $u \cdot u_{xxx} \in L^2(P_T)$, where 
       \[
       P_T = \{(t,x) \in \bar{Q}_T: u>0\};
       \]
       \item 
       \[
       \int_{Q_T} u \cdot \psi_t dx dt + \int_{P_T} u^2 u_{xxx} \psi_x dx dt - \int_{Q_T} |u|^{\lambda - 1} u \cdot \psi dx dt = 0
       \]
       for all $\psi \in C^1(\bar{Q}_T)$, and $u(t,\cdot) \to u_0(\cdot)$ in $H^1(\T)$ as $t \to \infty$.
   \end{enumerate}
\end{definition}
The equation of type \eqref{eqDet} was studied in \cite{KapTar} with the nonlinear term in the form $(|u|^n u_{xxx})_x, \ n \geq 1,$ and with the boundary conditions $u_x|_{x = 0, L} = u_{xxx}|_{x = 0,L} = 0$ on $[0,T]$. However, the main result in \cite{KapTar} guarantees the existence of the solution for arbitrary $0 \leq u_0(x) \in H^1(\T)$ only for $n \in (1,2)$. For other values of $n$, the solution exists only under the initial conditions, which satisfy the entropy estimates. Therefore, the existence of the solution of \eqref{eqDet} for any $0 \leq u_0(x) \in H^1(\T)$ requires extra work. We have the following result:
\begin{theorem}\label{Thm3.1}
    The Cauchy problem \eqref{eqDet} admits a nonnegative solution in $Q_T$. Furthermore,
    \[
      \int_0^L u(t,x) dx \leq \int_0^L u_0(x) dx \text{ on } t \in [0,T].
    \]
\end{theorem}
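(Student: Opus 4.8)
The plan is to construct the solution of \eqref{eqDet} as a limit of solutions of regularized problems, following the energy-entropy scheme of Bernis--Friedman \cite{Bern} and the adaptation in \cite{KapTar}, but with two modifications: handling the nonlinear absorption term $|u|^{\lambda-1}u$, and accommodating the periodic boundary conditions on $\T$ rather than the Neumann-type conditions used in \cite{KapTar}. First I would regularize the mobility, replacing $u^2$ by a strictly positive, bounded approximation $M_\ve(u) = \frac{u^2 + \ve}{1 + \ve u^{-2}}$ (or a similar cutoff ensuring $M_\ve \to u^2$ and uniform parabolicity), and also regularize the initial datum so that $u_{0,\ve} \geq \ve > 0$, $u_{0,\ve} \to u_0$ in $H^1(\T)$. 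For the regularized fourth-order quasilinear parabolic equation one obtains, by standard parabolic theory on the torus (Galerkin or Schauder), a unique smooth positive solution $u_\ve$ on $[0,T]$.

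The core of the argument is the derivation of $\ve$-uniform a priori bounds. Testing the regularized equation with $u_\ve$ itself and integrating by parts on $\T$ (periodicity makes all boundary terms vanish) gives the energy estimate controlling $\|\partial_x u_\ve(t)\|_{L^2(\T)}^2$, plus a dissipation term $\int\!\!\int M_\ve(u_\ve)(\partial_x^3 u_\ve)^2$, and crucially a \emph{nonnegative} contribution $\int\!\!\int |u_\ve|^{\lambda-1}u_\ve^2$ from the absorption term, which only helps: it is the dissipativity of $l$ that keeps everything bounded. Testing instead with $1$ yields $\frac{d}{dt}\int_\T u_\ve\,dx = -\int_\T |u_\ve|^{\lambda-1}u_\ve\,dx \leq 0$ once we know $u_\ve \geq 0$ (or $\geq \ve > 0$ in the regularized problem, up to an $O(\ve)$ correction), which is exactly the mass-decay claim. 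Second, one needs the entropy estimate: testing with $G_\ve''(u_\ve)$ where $G_\ve$ is chosen so that $G_\ve'' = 1/M_\ve$, one obtains a bound on $\int\!\!\int (\partial_x^2 u_\ve)^2$ (the second-derivative $L^2$ bound) and on $\int_\T G_\ve(u_\ve(t))$, up to controlling the extra term $\int\!\!\int |u_\ve|^{\lambda-1}u_\ve\, G_\ve''(u_\ve)$ coming from the absorption — here one uses $G_\ve'' \geq 0$ and the fact that $u_\ve$ is bounded in $H^1 \hookrightarrow C(\T)$, so this term is controlled by the energy bound and a Gronwall argument. These estimates are uniform in $\ve$.

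With the uniform bounds in hand — $u_\ve$ bounded in $L^\infty(0,T;H^1(\T))$, $\partial_x^2 u_\ve$ in $L^2(Q_T)$, $\partial_t u_\ve$ in $L^2(0,T;(H^1)^*)$ (read off from the equation using $M_\ve(u_\ve)\partial_x^3 u_\ve$ bounded in $L^2$ of the positivity set) — I would extract a subsequence converging weakly-$*$ in these spaces and, by Aubin--Lions compactness together with the $H^1 \hookrightarrow C^{1/2}$ embedding in one space dimension, uniformly on $\bar Q_T$; this gives $u \in C(\bar Q_T)\cap L^\infty(0,T;H^1(\T))$ and $u \geq 0$ as a uniform limit of (essentially) positive functions. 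On the open set $P_T$ where $u>0$, local uniform parabolicity of the limiting equation plus bootstrapping (Schauder) gives $u \in C^{1,4}(P_T)$ and $u\,u_{xxx}\in L^2(P_T)$. Passing to the limit in the weak formulation is routine away from $\{u=0\}$; the integrand $u_\ve^2 u_{\ve,xxx}$ on the degenerate set requires the now-standard argument (as in \cite{Bern, KapTar}) that the contribution from a neighbourhood of $\{u=0\}$ is negligible because $u_\ve^2|\partial_x^3 u_\ve|$ is equi-integrable there. Finally the mass inequality passes to the limit since $\int_\T u_{0,\ve}\to\int_\T u_0$ and $\int_\T u_\ve(t) \to \int_\T u(t)$ uniformly.

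The main obstacle I anticipate is the entropy estimate in the presence of the absorption term: one must verify that the cross term $\int\!\!\int |u_\ve|^{\lambda-1}u_\ve\,G_\ve''(u_\ve)$ does not blow up as $\ve \to 0$, since $G_\ve''(u_\ve)=1/M_\ve(u_\ve)$ degenerates precisely where $u_\ve$ is small. The resolution is that where $u_\ve$ is small the factor $|u_\ve|^{\lambda-1}u_\ve$ is correspondingly small (for $\lambda \geq 1$ the product $u^\lambda/M_\ve(u)$ stays bounded near $0$ as long as the regularization $M_\ve$ vanishes no faster than $u^2$ and $\lambda$ large enough — for small $\lambda$ one instead absorbs this into the entropy itself or uses the energy bound on $\|u_\ve\|_{L^\infty}$), so a careful choice of the regularization $M_\ve$ and a splitting of the domain into $\{u_\ve \leq 1\}$ and $\{u_\ve > 1\}$ closes the estimate. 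A secondary technical point is the adaptation of the integration-by-parts identities from the Neumann setting of \cite{KapTar} to the periodic setting, but on the torus all boundary terms simply vanish, so this is only bookkeeping.
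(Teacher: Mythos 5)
There is a genuine gap at the core of your a priori estimates. You build the second\mbox{-}derivative bound (and part of the compactness) on the entropy estimate, testing with $G_\ve'(u_\ve)$ where $G_\ve''=1/M_\ve$, and you correctly flag the cross term with the absorption as a difficulty; but the real obstruction sits one step earlier and cannot be removed by a clever choice of $M_\ve$: for mobility $u^2$ the entropy behaves like $G(s)\sim-\ln s$ near $s=0$ (with an even more singular $\ve$-correction for the paper's $f_\ve$), so the initial entropy $\int_{\mathbb{T}_L}G_\ve(u_{0,\ve})\,dx$ is \emph{not} bounded uniformly in $\ve$ whenever $u_0$ has zeros --- which is exactly the case the theorem must cover, since it assumes only $0\le u_0\in H^1(\mathbb{T}_L)$. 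This is precisely why the paper remarks that the result of \cite{KapTar} covers arbitrary nonnegative $H^1$ data only for mobility exponents $n\in(1,2)$ and otherwise needs initial data satisfying entropy conditions. The paper's proof dispenses with the entropy estimate entirely: it uses only the energy identity obtained by testing with $-\partial_x^2u^\ve$ (giving $L^\infty(0,T;H^1)$, hence H\"older-$\tfrac12$ continuity in $x$ and an $L^2(Q_T)$ bound on the flux $h_\ve=f_\ve(u^\ve)u^\ve_{xxx}$), combined with the Bernis--Friedman duality argument yielding H\"older-$\tfrac18$ continuity in time; Arzel\`a--Ascoli then gives uniform convergence on $\bar Q_T$, and interior parabolic regularity on $P_T$ gives convergence of all derivatives there. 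Your Aubin--Lions route to uniform convergence could be salvaged using only the bound $\partial_t u_\ve\in L^2(0,T;H^{-1})$, which follows from the flux bound and needs no entropy, but the claimed uniform $L^2(Q_T)$ bound on $\partial_x^2u_\ve$ must be dropped: it is not available.

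A second, related problem is your mechanism for nonnegativity. You propose a strictly positive, bounded (uniformly parabolic) regularization of the mobility and then assert $u\ge0$ because it is a uniform limit of ``essentially positive'' functions. Fourth-order parabolic equations have no maximum principle, so with a nondegenerate mobility the regularized solutions need not stay nonnegative even for positive data; in the Bernis--Friedman framework positivity of the approximations is enforced either by the entropy (unavailable here, as above) or by choosing a regularization that degenerates \emph{faster} near zero, which is what the paper does with $f_\ve(u)=u^6/(\ve u^2+u^4)\sim u^4/\ve$, citing \cite{27} for nonnegativity and smoothness of the regularized solutions. (Incidentally, your explicit formula $M_\ve(u)=(u^2+\ve)/(1+\ve u^{-2})$ simplifies identically to $u^2$, and the energy identity comes from testing with $-\partial_x^2u_\ve$, not with $u_\ve$.) The mass-decay argument itself coincides with the paper's and is fine once nonnegativity of the approximations is secured.
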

\begin{proof}
For any $\ve>0$ introduce $f_\ve(u) = \frac{u^6}{\ve u^2+u^4}$. Suppose $u_{0 \ve} > 0$ in $[0,L]$ and $u_{0 \ve}(x) \in C^\infty(\T)$ and $u_{0 \ve} \to u_0$ in $H^1(\T), \ \ve \to 0$. Thus, it follows from the results in \cite{27} that the regularized problem
\begin{equation}\label{eqDetReg}
    \begin{cases}
        u_t + \d_x (f_\ve(u) \d_x^3 u) + |u|^{\lambda-1} u =0 \text{ in } (0,T) \times \T \\
        u(0,x) = u_0 \ve(x),
    \end{cases}
    \end{equation}
    has a unique non-negative smooth (classic) solution $u^\ve(t,x)$ in $Q_T$. This fact is proved using compactness arguments and parabolic Schauder estimates. Multiplying \eqref{eqDetReg} by $-u_{xx}^\ve$ and integrating over $(0,L)$ we have
    \begin{equation}\label{Identity1}
\frac{1}{2} \frac{d}{dt} \|u_x^2(t)\|_2^2 + \int_0^L f_\ve(u^\ve)(u_{xxx}^\ve)^2 dx + \lambda \int_0^L (u^\ve)^{\lambda - 1} (u_x^\ve)^2 dx = 0
    \end{equation}
The convergence of $u_{0\ve}$ implies the existence of a constant $C>0$ such that $\|u_x^\ve(0)\| \leq C$. Then it follows from \eqref{Identity1}, Poincare inequality, and the embedding $H^1(\T) \in L^\infty(\T)$: 
\begin{equation}\label{3.6}
    \|u^\ve\|_{L^\infty(Q_T)} \leq C_1, \ \ C_1 >0.
\end{equation}
Then  \eqref{3.6} yields $\forall x_1 < x_2 \in [0,L]$
\begin{equation}\label{3.7}
|u(t,x_1) - u(t,x_2)| \leq \int_{x_1}^{x_2}|u_x(t,x)|dx \leq K|x_1-x_2|^{\frac{1}{2}}. 
\end{equation}
Denote $h_\ve(u^\ve) := (f_\ve(u^\ve) u_{xxx}^\ve)$. Integrating \eqref{Identity1} on $(0,T)$ we get
\begin{equation}\label{Identity2}
    \frac{1}{2} \|u_x^\ve(T)\|_2^2 + \int_{Q_T} h^2_\ve(u^\ve) dx dt + \lambda \int_{Q_T} (u^\ve)^{\lambda-1} (u_x^\ve)^2 dx dt = \frac{1}{2} \|u_x^\ve(0)\|_2^2.
\end{equation}
Since $u^\ve$ is bounded in $L^\infty(Q_T)$, so is $f_\ve(u^\ve)$, thus
\begin{equation}\label{bound2}
    \int_{Q_T} h_\ve^2(u^\ve) dx dt \leq A \text{ for some } A>0.
\end{equation}
\begin{lemma}
There exists $M\geq 0$ such that for any $x_0 \in (0,L)$ and any $t_1, t_2 \in [0,T], \ |t_1 - t_2| \leq 1$
\begin{equation}\label{3.10}
    |u^\ve(t_1,x_0) - u^\ve(t_2, x_0)| \leq M |t_1-t_2|^{\frac{1}{8}}.
\end{equation}
\end{lemma}
\begin{proof}
In this proof we will follow the main ideas of \cite{Bern}. We argue by contradiction. In the identity
\begin{equation}\label{eq:weak}
    \int_{Q_T} u^\ve \f_t dx dt = -\int_{Q_T} f_\ve(u^\ve) u^\ve_{xxx} \f_x dx dt + \int_{Q_T} l(u^\ve) \f dx dt. 
\end{equation}
we set $\f(t,x) = \xi(x) \theta_\delta(t)$, where $|\xi(x)|\leq 1$ and
\begin{equation*}
    \xi(x) = 
    \begin{cases}
        0, \ |x-x_0| \geq C M^2 |t_2 - t_1|^{\frac{1}{4}},\\
        1, \ |x-x_0| \leq \frac{1}{2} C M^2 |t_2 - t_1|^{\frac{1}{4}},
    \end{cases}
\end{equation*}
and the function $|\theta_\delta| \leq 1$ is described in \cite{Bern}. Following \cite{Bern} we have
\[
\int_{Q_T} u^2 \f_t dx dt \geq C_3 M^3 (t_2-t_1)^{\frac{3}{8}},
\]
while
\begin{multline*}
   \left|\int_{Q_T} f_\ve(u^\ve) u^\ve_{xxx} \f_x dx dt + \int_{Q_T} l(u^\ve) \f dx dt \right| \\ 
   \leq \frac{C_4}{M} \left(\int_{Q_T} h_\ve(u^\ve) dx dt\right)^{\frac{1}{2}}(t_2 - t_1)^{\frac{1}{2} - \frac{1}{8}} + C_5 \|l\|_{\infty} M (t_2 - t_1)^{\frac{1}{2} + \frac{1}{8}}.
\end{multline*}
Hence
\[
M^3(t_2 - t_1)^{\frac{3}{8}}  \leq C_6\left(\frac{1}{M} (t_2-t_1)^{\frac{1}{2}-\frac{1}{8}} + M (t_2-t_1)^{\frac{1}{2}+\frac{1}{8}} \right) \leq C_6 \left(\frac{1}{M} + M\right) (t_2 - t_1)^{\frac{3}{8}}.
\]
Thus, any $M \geq 1$ has to satisfy the inequality $M^2 \leq 2 C_\delta$, which is a contradiction. Hence \eqref{3.10} follows. 
\end{proof}
It follows from \eqref{3.6}, \eqref{3.7} and \eqref{3.10} that the set $\{u^\ve\}$ is relatively compact in $C(\bar{Q}_T)$, and hence up to a subsequence $u^\ve \to u$ in $C(\overline{Q_T})$. Note that all the derivatives $u_t^\ve, u_t^\ve, u_x^\ve, u_{xx}^\ve, u_{xxx}^\ve$ and $u_{xxxx}^\ve$ converge to the corresponding derivatives of $u$ in view of uniform parabolicity of in any compact subset of $P_T$. Thus, passing to the limit in \eqref{eq:weak} as $\ve \to 0$, we deduce that $u$ solves \eqref{eqDet}, which is also non-negative. Let us show that the mass of the solution is non-increasing. Integrating the solution of the regularized problem \eqref{eqDetReg} on $[0,L]$, we have
\[
\frac{d}{dt} \int_0^L u^\ve(t,x) dx + \int_0^L (u^\ve(t,x))^\lambda dx  = 0
\]
Since $u \geq 0$, we have 
\[
\frac{d}{dt} \int_0^L u^\ve(t,x) dx \leq 0,
\]
hence, for the regularized problem,
\begin{equation}\label{nonincrease}
\int_0^L u^\ve(t,x) dx \leq \int_0^L u_{0\ve}(x) dx. 
\end{equation}
Since $u^\ve(t,x)$ converges to $u(t,x)$ uniformly in $C(\bar{Q}_T)$, while $u_{0 \ve}$ converges to $u_0$ in $H^1(\T)$, the non-increasing property of the mass of $u$ follows from \eqref{nonincrease}. Hence the proof is complete. 
\end{proof}
\begin{corollary}
Suppose the conditions of Theorem \ref{Thm3.1} hold. Then for any $p \geq 2$ the solution $u$ satisfies
\begin{equation}\label{pth_est}
    \| \d_x u(t, \cdot)\|_2^p + 2 \int_0^t \|\d_x u(s, \cdot)\|_2^{p-2} \int_{u(s, \cdot) > 0} u^2(s,x) (\d_x^3 u(s,x))^2 dx ds \leq \|\d_x u_0\|_2^p.
\end{equation}
\end{corollary}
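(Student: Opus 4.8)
The plan is to derive \eqref{pth_est} from the regularized energy identity \eqref{Identity1} by ``raising it to the power $p/2$'' at the level of the approximations $u^\ve$ and then passing to the limit $\ve\to0$ along the subsequence constructed in the proof of Theorem~\ref{Thm3.1}. Set $g_\ve(s):=\|\d_x u^\ve(s)\|_2^2$. By \eqref{Identity1}, $g_\ve$ is absolutely continuous and nonincreasing, with
\[
g_\ve'(s)=-2\int_0^L f_\ve(u^\ve)(\d_x^3 u^\ve)^2\,dx-2\lambda\int_0^L (u^\ve)^{\lambda-1}(\d_x u^\ve)^2\,dx .
\]
Multiplying by $\frac{p}{2}\,g_\ve^{p/2-1}=\frac{p}{2}\|\d_x u^\ve(s)\|_2^{p-2}$, discarding the nonnegative $\lambda$-term, integrating on $[0,t]$, and using $p\ge 2$ together with nonnegativity of the dissipation integral, I obtain for every $t\in[0,T]$
\[
\|\d_x u^\ve(t)\|_2^{p}+2\int_0^t\|\d_x u^\ve(s)\|_2^{p-2}\int_0^L f_\ve(u^\ve)(\d_x^3 u^\ve)^2\,dx\,ds\le\|\d_x u^\ve(0)\|_2^{p}=\|\d_x u_{0\ve}\|_2^{p}.
\]

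Next I would pass to the limit $\ve\to0$ in this inequality. On the right, $u_{0\ve}\to u_0$ in $H^1(\T)$ gives $\|\d_x u_{0\ve}\|_2^p\to\|\d_x u_0\|_2^p$. On the left, from $u^\ve\to u$ in $C(\bar Q_T)$ together with the uniform bound $\|\d_x u^\ve(t)\|_2\le\|\d_x u_{0\ve}\|_2$ (monotonicity of $g_\ve$) one gets $\d_x u^\ve(t,\cdot)\weak\d_x u(t,\cdot)$ in $L^2(\T)$ for each $t$, hence $\|\d_x u(t)\|_2\le\liminf_\ve\|\d_x u^\ve(t)\|_2$. For the dissipation term I would argue by compact exhaustion of the set $P_T$ (which is relatively open in $\bar Q_T$, hence $\sigma$-compact): on $\bar Q_T$ the explicit form of $f_\ve$ gives $\|f_\ve(u^\ve)-(u^\ve)^2\|_\infty\le\ve$, so $f_\ve(u^\ve)\to u^2$ uniformly on $\bar Q_T$; and on any compact $K\subset P_T$, uniform parabolicity yields $\d_x^3 u^\ve\to\d_x^3 u$ uniformly on $K$, so $f_\ve(u^\ve)(\d_x^3u^\ve)^2\to u^2(\d_x^3 u)^2$ uniformly on $K$. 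Then Fatou's lemma in $s$ (everything being nonnegative), followed by letting $K\uparrow P_T\cap([0,t]\times\T)$ via monotone convergence, and finally $\liminf_\ve$ in the displayed inequality (using super-additivity of $\liminf$), should yield \eqref{pth_est} after rewriting the integral over $P_T\cap([0,t]\times\T)$ as the iterated integral $\int_0^t\|\d_x u(s)\|_2^{p-2}\int_{u(s,\cdot)>0}u^2(\d_x^3 u)^2\,dx\,ds$ by Tonelli.

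The step I expect to be the main obstacle is handling the weight $\|\d_x u^\ve(s)\|_2^{p-2}$ inside the time integral: a priori it converges only weakly, which does not suffice to push $\liminf$ through the product against $\int_K f_\ve(u^\ve)(\d_x^3 u^\ve)^2\,dx$. I plan to circumvent this using monotonicity in $s$: since $s\mapsto\|\d_x u^\ve(s)\|_2$ is nonincreasing and uniformly bounded, Helly's selection principle lets me pass to a further subsequence along which $\|\d_x u^\ve(s)\|_2\to\chi(s)$ for a.e.\ $s$, and the pointwise lower semicontinuity from the previous paragraph gives $\chi(s)\ge\|\d_x u(s)\|_2$; then for $a_\ve(s)=\|\d_x u^\ve(s)\|_2^{p-2}$ and $b_\ve(s)=\int_K f_\ve(u^\ve)(\d_x^3 u^\ve)^2\,dx$ one has $\liminf_\ve a_\ve b_\ve\ge\chi(s)^{p-2}\liminf_\ve b_\ve\ge\|\d_x u(s)\|_2^{p-2}\int_K u^2(\d_x^3 u)^2\,dx$, which is exactly what feeds into Fatou. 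Note that for $p=2$ the weight is absent, the Helly step is not needed, and the whole argument reduces to a routine Fatou/lower-semicontinuity passage; the entire difficulty lies in accommodating the weight when $p>2$.
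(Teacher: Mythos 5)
Your proof is correct, but it reaches the general-$p$ case by a different route than the paper. The paper only performs the $\ve\to 0$ limit passage for $p=2$ (weak lower semicontinuity of $\|\d_x u^{\ve}(t)\|_2$ plus local uniform convergence of $\d_x^3 u^{\ve}$ on compact subsets of $P_T$ for the dissipation term, exactly as in your second paragraph), obtaining \eqref{3.16}; it then gets \eqref{pth_est} for $p>2$ purely algebraically from the $p=2$ inequality, by bounding the weight via
\[
\|\d_x u(s,\cdot)\|_2^{p-2}\ \le\ \sup_{s\in[0,t]}\|\d_x u(s,\cdot)\|_2^{p-2}\ \le\ \|\d_x u_0\|_2^{p-2},
\]
where the last step uses the monotonicity of $t\mapsto\|\d_x u(t,\cdot)\|_2$ that the $p=2$ case already provides, and then multiplying the $p=2$ inequality by this constant. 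You instead raise the energy identity to the power $p/2$ at the level of the approximations $u^{\ve}$ (the deterministic analogue of what the paper does stochastically via It\^o's formula in \eqref{4.37}) and pass to the limit in the weighted inequality directly, which forces you to control the weight $\|\d_x u^{\ve}(s)\|_2^{p-2}$ inside the time integral; your Helly-selection argument combined with the pointwise lower semicontinuity $\chi(s)\ge\|\d_x u(s)\|_2$ does handle this correctly (including the degenerate case $\chi(s)=0$). The trade-off: the paper's bootstrap is a one-line observation but relies on noticing that the $p=2$ case already yields the monotone-in-time bound on the weight; your argument is self-contained and would survive in situations where the weight is not monotone, at the cost of the extra compactness machinery. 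Both proofs share the same analytic core, so there is no gap --- only a longer path for $p>2$.
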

\begin{proof}
    Since $u^\ve>0$, it follows from \eqref{eqDetReg} that
    \[
\|u_x^\ve(t, \cdot)\|_2^2  + 2 \int_{Q_t} f_\ve(u^\ve) (u^\ve_{xxx})^2 dx ds \leq \|u_x^\ve(0, \cdot)\|_2^2.
    \]
Thus for any $r>0$ we have 
\begin{equation}\label{quad_est}
\|u_x^\ve(t, \cdot)\|_2^2  + 2 \int_0^t \int_{u(s)>r} f_\ve(u^\ve) (u^\ve_{xxx})^2 dx ds \leq \|u_x^\ve(0, \cdot)\|_2^2.
\end{equation}
Since $u^\ve(0) \to u_0$ in $H^1(\T)$, then $u_x^\ve(t)$ is weakly compact in $L^2(0,L)$ for any $t \in [0,T].$ Then for any $t \in [0,T]$ there exists $\ve_k(t) \to 0$ as $k \to \infty$ such that for any $\f \in C^\infty (\T)$ and some $\xi \in L^2(\T)$ we have
\[
\int_0^L u_x^{\ve_k} (t,x) \f(x) dx \to \int_0^L \xi(x) \f(x) dx, \ k \to \infty.
\]
On the other hand, using the uniform convergence $u^\ve \to u$ in $C(Q_T)$ and Dominated Convergence Theorem, we have
\[
\int_0^L u_x^{\ve_k}(t,x) \f(x) dx  = -\int_0^L u^{\ve_k}(t,x) \f^{'}(x) dx \to -\int_0^L u(t,x) \f^{'}(x) dx = \int_0^L u_x(t,x) \f(x) dx
\]
since $u \in H^1(\T)$. Thus $u_x^{\ve_k} \to^w u_x, \ k \to \infty$ in $L^2(\T)$. By weak lower semi-continuity of the norm,
\begin{equation}\label{liminf}
    \liminf_{k \to \infty} \|u_x^{\ve_k(t)}(t)\|_2^2 \geq \|u_x(t)\|_2^2.
\end{equation}
Fix $t \in [0,T]$ and denote $\ve_k(t) = \ve_k$. Due to uniform convergence of $u$ and all of its derivatives (up to the fourth order) on any compact subset of $P$ we deduce
\begin{equation}\label{3.16}
    \|u_x(t)\|_2^2 + 2 \int_0^t \int_{u(s)>r} u^2(s) (u_{xxx}(s))^2 dx ds \leq \|u_x(0, \cdot)\|_2^2. 
\end{equation}
It follows from \eqref{3.16} that for arbitrary $r>0$ \eqref{pth_est} is satisfied for $p=2$. The estimate \eqref{pth_est} follows along the lines of \cite{Gess}, since
\begin{multline*}
 \| \d_x u(t, \cdot)\|_2^p + \int_0^t \|\d_x u(s, \cdot)\|_2^{p-2} \int_{u(s, \cdot) > 0} u^2(s,x) (\d_x^3 u(s,x))^2 dx ds \\
 \leq \sup_{s \in [0,t]} \|\d_x u(s)\|_2^{p-2} \left(\|\d_x u(t)\|^2_2  + \int_0^t \int_{u(s, \cdot) > 0} u^2(s,x) (\d_x^3 u(s,x))^2 dx ds \right).
\end{multline*}
\end{proof}
\section{Stochastic dynamics.}\label{Sec4}
In this section we consider the stochastic contribution to \eqref{1.1} on $[0,\delta) \times \T$ for some $0<\delta \leq T$:
\begin{equation}\label{eq:stoch}
\begin{cases}
    dw = \frac{1}{2} \sum_{k \in \Z} \lambda_k^2 \d_x(\Psi_k \d_x(\Psi_k w)) dt  + \sum_{k \in \Z}\lambda_k (\d_x(\Psi_k w)) d \beta_k(t) + \sum_{k \in \Z} \gamma_k \Psi_k f(w)) d\beta_1^k, \\
    w(0) = w_0.
\end{cases}    
\end{equation}
\begin{definition}\label{Def4.1}
    Let $w_0 = w_0(x, \omega) \in H^1(\T)$ almost surely. A triple consisting of a filtered probability space $([0,1], \tilde{\F}, (\tilde{\F}_{t})_{t \in [0,\delta)}, \mathbf{P})$, where $(\tilde{\F}_{t})_{t \in [0,\delta)}$ is a complete and right-continuous filtration, an  $(\tilde{\F}_t)$ - adapted $H_w^1(\T)$- valued process $\ti{W}(t)$ and mutually independent standard real-valued $(\tilde{\F}_t)$ - Wiener processes $\ti{\beta}^k(t)$ and $\ti{\beta}_1^k(t)$ is called the solution of \eqref{eq:stoch} if 
    \begin{align}\label{eq:stoch:weak}
        & & (\ti{w}(t,\cdot), \f)_2  = (w_0, \f)_2 - \frac{1}{2} \sum_{k \in \Z} \lambda_k^2 \int_0^t (\Psi_k \d_x(\Psi_k \ti{w}(s, \cdot)), \d_x\f)_2 d s \\
        \nonumber & & - \sum_{k \in \Z} \lambda_k \int_0^t (\Psi_k \ti{w}(s, \cdot)), \d_x \f)_2 d \ti{\beta}_k(s) + \sum_{k \in \Z} \gamma_k \int_0^t (\Psi_k f(\ti{w}(s, \cdot)), \f)_2 d \ti{\beta}_1^k(s)
    \end{align}
    is satisfied for every $\f \in C^\infty(\T)$ and $t \in [0, \delta)$ $\P = \lambda_{[0,1]}$ almost surely. 
\end{definition}
\begin{theorem}\label{Thm4.1}
    For any $p \in [2, \infty)$ and for any $w_0 \in L^p([0,1], \F_0, \P, H^1(\T))$, $w_0 \geq 0$ a.s., there exists a non-negative solution $w(t,x,\omega)$ of \eqref{eq:stoch:weak} with the initial condition $w_0$, and satisfies the a-priori estimates 
    \begin{equation}\label{eq:apr:est1}
    \E \sup_{t \in [0,\delta)} \|w(t, \cdot)\|_{1,2}^p \leq C_1 \E \|w_0\|_{1,2}^p;
    \end{equation}
    \begin{equation}\label{eq:apr:est2}
    \lim_{t \to \delta} \sup \E \|\d_x w(t, \cdot)\|_2^p \leq e^{C_2 \delta} \left(\E \|\d_x w_0\|_2^p + C_3 \delta \E \left(\int_0^L w_0 dx\right)^p\right),
    \end{equation}
    where $C_1, C_2, C_3 < \infty$ are independent of $\delta, w$ and $w_0$.
\end{theorem}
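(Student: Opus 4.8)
We outline the argument. The plan is to construct the solution by a spectral (Galerkin) approximation, to derive a priori bounds uniformly in the approximation parameter, to pass to the limit via the Jakubowski--Skorokhod representation, and finally to read the non-negativity off the limiting equation. For $m\ge 1$ put $V_m:=\mathrm{span}\{\Psi_k:\ |k|\le m\}$ and let $P_m$ be the $L^2(\T)$-orthogonal projection onto $V_m$ (note that $\d_x$ maps $V_m$ into itself by \eqref{1.5}). We look for $w_m:[0,\delta)\to V_m$ solving
\[
dw_m=\tfrac12\sum_{|k|\le m}\l_k^2\,P_m\d_x\big(\Psi_k\d_x(\Psi_k w_m)\big)\,dt+\sum_{|k|\le m}\l_k\,P_m\d_x(\Psi_k w_m)\,d\beta^k+\sum_{|k|\le m}\g_k\,P_m\big(\Psi_k f(w_m)\big)\,d\beta_1^k,
\]
$w_m(0)=P_mw_0$. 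On $V_m$ the first two coefficients are linear and the third is globally Lipschitz, so this Itô system has a unique global solution.

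\emph{A priori estimates (uniform in $m$).} Apply Itô's formula to $\|w_m\|_2^2$ and to $\|\d_x w_m\|_2^2$. The decisive point is the cancellation built into Stratonovich transport noise: after integration by parts, the corrector $\tfrac12\sum\l_k^2\d_x(\Psi_k\d_x(\Psi_k w_m))$ produces the top-order term $-\sum\l_k^2\int_\T\Psi_k^2(\d_x^j w_m)^2\,dx$ (with $j=1$ in the $L^2$ identity, $j=2$ in the $\dot H^1$ identity), which is cancelled exactly by the top-order term $+\sum\l_k^2\int_\T\Psi_k^2(\d_x^j w_m)^2\,dx$ coming from the quadratic variation of $\sum\l_k\d_x(\Psi_k w_m)\,d\beta^k$. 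What survives involves only $w_m,\d_x w_m$ multiplied by $\d_x^i\Psi_k$, $i\le 3$; using \eqref{1.5} and the decay $c_k=O(k^{-2})$ of the eigenbasis, each such term is $\le C_k\|w_m\|_{1,2}^2$ with $\sum_k(\l_k^2+\g_k^2)C_k<\infty$ by \eqref{1.3}, while the $f$-noise terms are estimated by $O(\|w_m\|_{1,2}^2)$ using $|f(s)|\le\mathrm{Lip}(f)|s|$, $f(0)=0$. Thus $\|w_m(t)\|_{1,2}^2=\|P_mw_0\|_{1,2}^2+\int_0^t G_m\,ds+M_m(t)$ with $|G_m|\le C\|w_m\|_{1,2}^2$ and $M_m$ a continuous martingale; in the Burkholder--Davis--Gundy step the transport-noise martingale must first be integrated by parts to remove the otherwise uncontrolled $\d_x^2 w_m$ from its bracket, giving $\langle M_m\rangle_t\le C\int_0^t\|w_m\|_{1,2}^4\,ds$, and BDG together with Gronwall yields \eqref{eq:apr:est1} for $w_m$, uniformly in $m$. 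For \eqref{eq:apr:est2}, integrating the equation over $\T$ makes the divergence-form drift and the transport noise vanish, leaving $\int_\T w_m(t)\,dx=\int_\T P_mw_0\,dx+\sum_k\g_k\int_0^t(\Psi_k f(w_m),1)_2\,d\beta_1^k$, hence $\E\big(\int_\T w_m(t)\,dx\big)^p\le C\,\E\big(\int_\T w_0\,dx\big)^p+C\int_0^t\E\|w_m(s)\|_2^p\,ds$; feeding this and the Poincar\'e--Wirtinger bound $\|w_m\|_2^2\le C\|\d_x w_m\|_2^2+C\big(\int_\T w_m\,dx\big)^2$ into the $\dot H^1$ identity and running an iterated Gronwall argument for the pair $\big(\E\|\d_x w_m(t)\|_2^p,\ \E(\int_\T w_m(t)\,dx)^p\big)$ --- in which the mass enters the $\dot H^1$ balance only through an extra time-integration, producing the factor $\delta$ --- gives \eqref{eq:apr:est2} with constants independent of $m$, $\delta$, $w_0$.

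\emph{Compactness and passage to the limit.} By the previous step $(w_m)$ is bounded in $L^p(\Omega;L^\infty(0,\delta;H^1(\T)))$; writing $w_m$ as its initial datum plus a Bochner integral in $H^{-1}(\T)$ plus stochastic integrals yields a uniform bound in $W^{\a,2}(0,\delta;H^{-1}(\T))$ for some $\a\in(0,\tfrac12)$, so by the Aubin--Lions--Simon lemma the laws of $(w_m)$ are tight on $L^2((0,\delta)\times\T)\cap C([0,\delta];H^{-1}(\T))$ and, together with the $H^1$-bound, on $C([0,\delta];H^1_w(\T))$. Since the last space is not metrizable we invoke the Jakubowski generalization \cite{Jakub} of Skorokhod's theorem to pass, on a new stochastic basis, to a.s.\ converging copies $\ti w_m\to\ti w$ (with limiting Wiener processes). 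Passing to the limit in the Galerkin weak formulation is routine: $f(\ti w_m)\to f(\ti w)$ in $L^2((0,\delta)\times\T)$ by continuity of $f$ and strong $L^2$-convergence, the remaining (linear) terms pass by the $H^1_w$-convergence, and the stochastic integrals converge by the usual martingale-identification argument; the bounds \eqref{eq:apr:est1}--\eqref{eq:apr:est2} survive by weak lower semicontinuity and Fatou. This gives a solution $\ti w$ in the sense of Definition \ref{Def4.1}.

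\emph{Non-negativity, and the main difficulty.} Since $\ti w\in L^2(0,\delta;H^1(\T))$ a.s.\ and \eqref{eq:stoch} holds in $H^{-1}(\T)$, we apply the It\^o formula in the Gelfand triple $H^1\subset L^2\subset H^{-1}$ to $\int_\T\phi_\ve(\ti w)\,dx$, where $\phi_\ve$ are smooth convex functions with $\phi_\ve(s)\downarrow(s^-)^2$, $\phi_\ve'(s)\to 2s^-$, $\phi_\ve''(s)\to 2\,\mathbf 1_{\{s<0\}}$. Exactly the same cancellation as above occurs: the drift term $\int_\T\phi_\ve'(\ti w)\tfrac12\sum\l_k^2\d_x(\Psi_k\d_x(\Psi_k\ti w))\,dx$ and the second-order It\^o term $\tfrac12\int_\T\phi_\ve''(\ti w)\sum\l_k^2(\d_x(\Psi_k\ti w))^2\,dx$ jointly eliminate every contribution containing $\d_x\ti w$, leaving in the limit $\ve\to0$ only $\tfrac12\sum\l_k^2\int_\T\big((\d_x\Psi_k)^2-\Psi_k\d_x^2\Psi_k\big)(\ti w^-)^2\,dx\le C\|\ti w^-\|_2^2$ (finite by $c_k=O(k^{-2})$ and \eqref{1.3}); the nonlinear noise contributes $\sum\g_k^2\int_\T\mathbf 1_{\{\ti w<0\}}\Psi_k^2 f(\ti w)^2\,dx\le C\|\ti w^-\|_2^2$ because $f(0)=0$ and $f$ is Lipschitz, while the martingale part is supported on $\{\ti w<0\}$ and has mean zero. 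Hence $\E\|\ti w^-(t)\|_2^2\le C\int_0^t\E\|\ti w^-(s)\|_2^2\,ds$, and since $w_0\ge 0$, Gronwall forces $\E\|\ti w^-(t)\|_2^2\equiv 0$, i.e.\ $\ti w(t)\ge 0$ a.s.\ for every $t\in[0,\delta)$. The genuinely new difficulty, compared with the linear Stratonovich equation of \cite{Gess}, is precisely this non-negativity: it is not inherited by the Galerkin approximations and has to be obtained on the limit, and it works only because the transport-noise cancellation that powers the energy estimate also annihilates the $\d_x\ti w$-terms in the evolution of $\|\ti w^-\|_2^2$, and because $f(0)=0$ with a Lipschitz bound turns the nonlinear It\^o noise into a term controlled by $\|\ti w^-\|_2^2$ rather than a loose remainder. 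A secondary technical point, pervading the whole proof, is that the infinite sums over $k$ close only thanks to the explicit $c_k=O(k^{-2})$ decay of $\{\Psi_k\}$ together with \eqref{1.3}, and that the BDG estimate requires an integration by parts inside the transport-noise bracket.
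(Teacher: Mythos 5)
Your argument is sound in outline but follows a genuinely different route from the paper. The paper does not use a Galerkin scheme: it regularizes \eqref{eq:stoch} by adding a viscosity term $\ve\,\d_x^2 w^\ve$ and obtains existence and uniqueness of the approximations from the variational theory of Liu--R\"ockner (verifying hemicontinuity, local monotonicity, coercivity and growth for the Gelfand triple $H^2\subset H^1\subset L^2$); it then proves non-negativity and the mass bound \emph{at the $\ve$-level}, where the $H^2$-regularity of $w^\ve$ makes the It\^o formula for the regularized negative-part functional routine and, crucially, lets the mass estimate close by itself (since $w^\ve\ge0$, one bounds $|\int\Psi_k f(w^\ve)|$ by $C\int w^\ve$ and Gronwall gives $\E(\int w^\ve)^p\le e^{Ct}\E(\int w_0)^p$ with no reference to $\|\d_x w^\ve\|_2$); the limit $\ve\to0$ is then taken via Besov bounds, tightness in $BC^0$, Skorokhod representation and the martingale-identification argument, exactly as in your last step. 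Your Galerkin route buys you a finite-dimensional, globally Lipschitz SDE system in place of the Liu--R\"ockner hypotheses, and your observation that $P_m$ commutes with $\d_x$ on the trigonometric basis and contracts the relevant norms, so that the Stratonovich cancellation survives as an inequality of the right sign, is correct. The price is twofold. First, since the Galerkin approximations are not non-negative, your mass estimate cannot decouple and you must run the coupled Gronwall for $(\E\|\d_x w_m\|_2^p,\E|\int_\T w_m|^p)$; this does work and does produce the factor $\delta$ in \eqref{eq:apr:est2}, but it is the step you should write out most carefully. Second, you prove non-negativity only on the limit, where $\ti w$ has merely $L^\infty(0,\delta;H^1)$ regularity; the generalized It\^o formula for $\int_\T\phi_\ve(\ti w)\,dx$ in the triple $H^1\subset L^2\subset H^{-1}$ is available because $\phi_\ve''$ is bounded and $\phi_\ve'(\ti w)\in H^1$, but it requires an approximation argument that you gloss over, whereas the paper avoids the issue entirely by proving positivity at the more regular approximate level. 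One small imprecision: the drift and quadratic-variation terms do not ``eliminate every contribution containing $\d_x\ti w$''; the cross term $\sum_k\lambda_k^2\int_\T\phi_\ve''(\ti w)\,\Psi_k(\d_x\Psi_k)\,\ti w\,\d_x\ti w\,dx$ survives and must be removed by a further integration by parts (as in the paper's treatment of $J_2$ in \eqref{4.51}); your final displayed bound is nevertheless the correct one.
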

\begin{proof}
    {\bf Step 1: Regularization of the problem.} For any $\ve \in (0,1]$ we consider the viscous regularization of the problem \eqref{eq:stoch}:
    \begin{equation}\label{eq:stoch:reg}
dw^\ve = \left(\frac{1}{2} \sum_{k \in \Z} \lambda_k^2 \d_x(\Psi_k \d_x(\Psi_k w^\ve)) + \ve \d_x^2 w^\ve\right) dt  + \sum_{k \in \Z}\lambda_k (\d_x(\Psi_k w^\ve)) d\beta^k +
    \sum_{k \in \Z}\gamma_k \Psi_k f(w^\ve)) d\beta_1^k, 
\end{equation}
with the initial condition $w^\ve(0) = w_0$. Denote $A^\ve: H^2(\T) \to L^2(\T)$ to be
\begin{equation}\label{A_eps}
    A^\ve w := \frac{1}{2} \sum_{k \in \Z} \lambda_k^2 \d_x(\Psi_k \d_x(\Psi_k w^\ve)) + \ve \d_x^2 w^\ve,
\end{equation}
and the diagonal Hilbert-Schmidt-Valued operator 
\[
B: H^2(\T) \to L^2(H^2(\T) \times H^2(\T); H^1(\T)):=L^2
\]
is given by 
\[
B(w) \begin{pmatrix}
    u\\
    v
\end{pmatrix}
 = (B_1(w), B_2(w)) \begin{pmatrix}
    u\\
    v
\end{pmatrix} 
:= B_1(w) u + B_2(w) v,
\]
where 
\[
B_1(w) u  = \sum_{k \in \Z} \lambda_k(u, \Psi_k)_{2,2} (\d_x(\Psi_k w))
\]
and
\[
B_2(w) v  = \sum_{k \in \Z} \gamma_k(v, \Psi_k)_{2,2} \Psi_k f(w)
\]
Since $\begin{pmatrix}
    \Psi_k\\
    0
\end{pmatrix} $ and $\begin{pmatrix}
    0 \\
    \Psi_k
\end{pmatrix} $ form the basis in $H^2(\T) \times H^2(\T)$, then
\begin{multline*}
    \|B(w)\|^2_{L^2} := \|B_1(w) \Psi_k\|_{H^1(\T)}^2  +\|B_2(w) \Psi_k\|_{H^1(\T)}^2  \\
    = \sum_{k \in Z} \lambda_k^2 \int_0^L \left[(\d_x(\Psi_k w))^2 + (\d_x^2(\Psi_k w))^2\right] \, dx \\ + \sum_{k \in Z} \gamma_k^2 \int_0^L \left[(\Psi_k f(w))^2 + (\d_x(\Psi_k f(w)))^2\right] \, dx.
\end{multline*}

This way the equation \eqref{eq:stoch} can be re-written in the abstract form
\begin{equation}\label{eq:stoch:abs}
    d w^\ve = A^\ve w^\ve dt + (B w^\ve) d \overline{W}_{H^2(\T)},
\end{equation}
where
\[
\overline{W}_{H^2(\T)}:= (W, W_1) := \left(\sum_{k \in \Z} \beta^k \Psi_k, \sum_{k \in \Z} \beta_1^k \Psi_k \right)
\]
is a cylindrical $\F_t$-Wiener process in $H^2(\T) \times H^2(\T)$.

\begin{proposition}\label{Prop4.1}
 For any $p \in [2, \infty)$ and for any initial condition $w_0 \in L^p([0,1], \F_0, \P, H^1(\T))$,  the equation \eqref{eq:stoch:reg} has a unique variational solution $w^\ve$ satisfying
 \begin{equation}\label{4.10}
 \E \left(\sup_{t \in [0,\delta)} \|w^\ve(t,\cdot)\|_{1,2}^p + \int_0^\delta \|w^\ve(t, \cdot)\|_{2,2}^2 dt\right)< \infty.
 \end{equation}
 Furthermore, 
 \begin{enumerate}
     \item \begin{equation}\label{4.11}
         \E \sup_{t \in [0,\delta)} \|w^\ve(t, \cdot)\|_{1,2}^2 \leq C_1 \E \|w_0\|_{1,2}^p,
     \end{equation}
     \item If $w_0 \geq 0$ a.s., then $w^\ve(t,x) \geq 0$ a.s.
     \item If $w^\ve(t,x) \geq 0$ a.s., then
     \begin{equation}\label{4.13}
\lim_{t \to \delta} \E \|\d_x w^\ve\|_2^p \leq e^{C_2 \delta}\left(\E \|\d_x w_0\|_2^p + C_3 \delta \E \left| \int_0^L w_0 dx \right|^p \right)
     \end{equation}
     where $C_1, C_2, C_3$ are independent of $\ve, \delta$
 and $w_0$.
 \end{enumerate}
\end{proposition}
\begin{proof}
The conclusion of proposition \ref{Prop4.1} follows from Theorem 5.1.3 in \cite{LiuRoch} under the following conditions for any $u,v$ and $w$ in $H^2(\T)$:\\

(H1) \ (Hemicontinuity) The map $\lambda \to \langle \langle A^\ve(u+\lambda v),w\rangle \rangle$ is continuous on $\R$\\

(H2') \ (Local monotonicity)
\[
 2  \langle \langle A^\ve(u-v), u-v\rangle \rangle + \|B(u) - B(v)\|^2_{\mathcal{L}_2(H^2(\T) \times H^2(\T), H^1(\T))} \leq \rho(v) \|u-v\|_{H^1(\T)}^2
\]
for some non-negative, mearurable, hemicontinuous and locally bounded function on $V$.

(H3) (Coercivity) There are constants $C_0 \in \R$, $\alpha>1$ and $\theta>0$ such that
\[
2 \langle \langle A^\ve(v), v\rangle \rangle + \|B(v)\|^2_{\mathcal{L}_2(H^2(\T)\times H^2(\T), H^1(\T))} \leq C_0\|v\|_{H^1(\T)}^2  - \theta \|v\|_{H^2(\T)}^\alpha
\]

(H4') (Growth)
\[
\|A(v)\|_{L^2(\T)}^{\frac{\alpha}{\alpha-1}} \leq C_0\|v\|^\alpha_{H^2(\T)} (1+\|v\|_{H^1(\T)}^\beta)
\]
   Note that the above conditions make use of the Gelfand triple $V \subset H \subset V^*$, where $V^* = L^2(\T), \ H = H^1(\T)$ and $V = H^2(\T)$. 
   
   The condition (H1) (hemicontinuity) is straightforward.
   
   Next, we check the condition (H2'). We have
    \begin{eqnarray}\label{identity3}
\nonumber & 2 & \langle \langle A^\ve(u-v), u-v\rangle \rangle + \|B(u) - B(v)\|^2_{L^2(H^2(\T), H^1(\T))} = 2 \langle A^\ve(u-v), u-v \rangle \\
\nonumber & + &  2 \langle \d_x A^\ve(u-v), \d_x(u-v) \rangle + \|B(u) - B(v)\|^2_{\mathcal{L}_2(H^2(\T) \times H^2(\T); L^2(\T))} \\
\nonumber & + &  \|B(u) - B(v)\|^2_{\mathcal{L}_2(H^2(\T) \times H^2(\T); \dot{H}^1(\T))} \\
\nonumber & = & 2 \langle A^\ve(u-v), u-v \rangle + 2 \langle \d_x A^\ve(u-v), \d_x(u-v) \rangle \\
\nonumber & + & \sum_{k} \lambda_k^2 \int_0^L (\d_x(\Psi_k(u-v)))^2 \, dx +  \sum_{k} \lambda_k^2 \int_0^L (\d^2_x(\Psi_k(u-v)))^2 \, dx \\
& + & \sum_{k} \gamma_k^2 \int_0^L (\Psi_k(f(u)-f(v)))^2 \, dx + \sum_{k} \gamma_k^2 \int_0^L (\d_x(\Psi_k(f(u)-f(v))))^2 \, dx \\
\nonumber & := &  J_1 + J_2 + J_3 + J_4 + J_5 + J_6.
    \end{eqnarray}
    Let us analyze every term in \eqref{identity3} separately. Integrating by parts, we have
    
 \begin{eqnarray}\label{J1} 
 & & J_1 := -  \sum_k \lambda_k^2 \int_0^L \Psi_k^2 (\d_x(u-v)^2) dx - \frac{1}{4} \sum_k \lambda_k^2 \int_0^L (\d_x \Psi_k^2) (\d_x(u-v))^2 dx \\
\nonumber & - & 2\ve \int_0^L(\d_x(u-v))^2 dx = - \sum_k \lambda_k^2 \int_0^L \Psi_k^2 (\partial_x(u-v))^2\, dx + \frac{1}{4} \sum_k  \int_0^L (\d_x^2 \Psi_k^2)(u-v)^2 \, dx \\
 \nonumber & - & 2 \ve \int_0^L (\d_x(u-v))^2 dx \leq -\sum_k \lambda_k^2 \int_0^L \Psi_k^2 (\d_x(u-v)^2) dx + C \|u-v\|_2^2,
 \end{eqnarray}
for some $C>0$, independent of $\ve$. Here we used \eqref{1.3} and \eqref{1.5}. Next, following \cite{Gess}, we have
\begin{eqnarray}\label{J2} 
 & & J_2 := -  \sum_k \lambda_k^2 \int_0^L \Psi_k^2 (\d_x^2(u-v)^2) dx + \frac{5}{4} \sum_k \lambda_k^2 \int_0^L (\d_x^2 \Psi_k)^2 (\d_x(u-v))^2 dx \\
\nonumber & - & \frac{1}{4}  \sum_k \lambda_k^2 \int_0^L (\d_x^4 \Psi_k^2) (\d_x(u-v))^2 dx - 2 \ve \int_0^L (\d_x^2(u-v))^2 \, dx \\
 \nonumber & \leq & -  \sum_k \lambda_k^2 \int_0^L \Psi_k^2 (\d_x^2(u-v)^2) dx + C \|u-v\|_{1,2}^2  -   2 \ve \int_0^L (\d_x^2(u-v))^2 \, dx,
 \end{eqnarray}
    where, once again, $C>0$ is independent of $\ve$. 
  \begin{eqnarray}\label{J3} 
\nonumber & & J_3 = \sum_k \lambda_k^2 \int_0^L [(\d_x \Psi_k)(u-v) \Psi_k \d_x(u-v)]^2\, dx \\
&  & \leq 2 \left( \sum_k \lambda_k^2 \int_0^L \Psi^2_k (\d_x (u-v))^2 \,dx  -  \sum_k \lambda_k^2 \int_0^L  \Psi_k(\d_x^2 \Psi_k)(u-v)^2\, dx \right).
  \end{eqnarray}
  We proceed with the estimate for $J_4$:
   \begin{eqnarray}\label{J4} 
\nonumber & & J_4 = \sum_k \lambda_k^2 \int_0^L \Psi_k^2 (\d_x^2(u-v))^2  \, dx + \sum_k \lambda_k^2  \int_0^L (2 (\d_x \Psi_k)^2 - 4 \Psi_k (\d_k^2 \Psi_K))(\d_x(u-v))^2 \, dx  \\
 & + & \sum_k \lambda_k^2  \int_0^L \Psi_k (\d_x^4 \Psi_k)(u-v)^2 \, dx \leq \sum_k \lambda_k^2  \int_0^L \Psi_k^2 (\d_x^2(u-v))^2 \, dx + C\|u-v\|_{1,2}^2,
\end{eqnarray}
where $C>0$ is independent of $\ve$. 

The estimate of the fifth term uses the Lipschitz condition:
\begin{equation}\label{J5}
J_5 \leq  \sum_k \gamma_k^2 \int_0^L (|\Psi_k| K |u-v|)^2 \, dx \leq C \|u-v\|_2^2,
\end{equation}
where, once again, $C$ is independent of $\ve$. In order to estimate $J_6$, consider
 \begin{multline}\label{aux1J6} 
     \int_0^L \left(\d_x(\Psi_k(f(u)-f(v)))\right)^2 \, dx = \int_0^L\left[\d_x\Psi_k \, (f(u)-f(v)) + \Psi_k(f'(u) u_x - f'(v) v_x)\right]^2 \, dx\\
     \leq C \|u-v\|_2^2 + C \int_0^L |f'(u) u_x - f'(v) v_x|^2 \, dx.
 \end{multline}
 Since $f'(u)$ is both Lipschitz and bounded, we have
 \[
|f'(u) u_x - f'(v) v_x| \leq K|u_x - v_x| + K(u-v)|v_x|. 
 \]
Taking into account that $v_x$ is absolutely continuous due to the embedding $H^1 \subset C^{0,1/2}$, we have $\sup_{x \in \T}|v_x| \leq C_1 \|v\|_{2,2}$. Therefore,
\begin{equation}\label{aux2J6}
    \int_0^L |f'(u) u_x - f'(v) v_x| \, dx \leq C\|u-v\|_{1,2}^2 + C \|u-v\|_2^2 \|v\|_{2,2}^2.
\end{equation}
In view of \eqref{aux1J6} and \eqref{aux2J6}, we deduce
\begin{equation}\label{J6}
J_6 \leq C\|u-v\|_{1,2}^2 + C\|u-v\|_2^2 \|v\|_{2,2}^2.
\end{equation}
Combining the estimates for $J_1 - J_6$, we get
\begin{equation}\label{CondH2}
2 \langle \langle A^\ve(u-v), u-v \rangle \rangle + \|B(u) - B(v)\|^2_{L^2} \leq C (1 + \|v\|_{2,2}^2
)\|u-v\|_{1,2}^2 - 2 \ve \|u-v\|_{2,2}^2  
\end{equation}
which implies that the condition (H2') \, (local monotonicity) holds. Note that the constant $C>0$ in \eqref{CondH2} is independent of $u,v$ and $\ve$. 

Let us now check the coercivity condition (H3) from Theorem 5.1.3 \cite{LiuRoch}. In a similar way as before, we get
\[
2 \langle \langle A^\ve u, u\rangle \rangle + \|B_1(u)\|^2_{L^2(H^2(\T), H^1(\T))} \leq C \|u\|^2_{1,2} - 2\ve \|u\|^2_{2,2}
\]
for some constant $C>0$ independent of $u \in H^2(\T)$ and $\ve$. Let us proceed with the estimate for $B_2$:
\[
\|B_2(u)\|^2_{L^2(H^2(\T), H^1(\T))} = \sum_k \gamma_k^2 \int_0^L (\Psi_k f(u))^2 \, dx + \sum_k \gamma_k^2 \int_0^L (\d_x(\Psi_k f(u)))^2 \, dx := I_1 + I_2.
\]
We have 
\begin{equation}\label{I1}
    I_1 \leq C \|u\|_2^2
\end{equation}
which follows from the linear growth condition on $f(u)$. As for $I_2$,
\begin{equation}\label{I2}
    I_2 = \sum_k \gamma_k^2 \int_0^L [\d_x (\Psi_k) f(u) + \Psi_k f'(u) u_x]^2 \, dx \leq C \|u\|_2^2  + C\|u_x\|_2^2 = C \|u\|_{1,2}^2,
\end{equation}
    where, once again, we use the fact that $f'(u)$ is bounded. Thus, combining  the estimates \eqref{I1}-\eqref{I2}, we conclude 
    \begin{equation}\label{4.27}
        2 \langle \langle A^\ve u, u\rangle \rangle + \|B(u)\|^2_{\mathcal{L}_2(H^2(\T)\times H^2(\T), H^1(\T))} \leq C \|u\|^2_{1,2} - 2\ve \|u\|^2_{2,2},
\end{equation}
hence the condition (H3) holds. \\

    Finally, let us check the condition (H4'). For any $u \in H^2(\T)$ and $\f \in C^{\infty}(\T)$, integrating by parts we have
    \begin{multline*}
|\langle A^\ve u, \f\rangle| = \frac{1}{2} \sum_{k} \lambda_k^2 \left| \int_0^L \Psi_k(\d_x (\Psi_k u))(\d_x \f) dx \right| + \ve \left|\int_0^L (\d_x u ) (\d_x \f) \, dx\right|  \\
+ \frac{1}{2} \sum_k \lambda_k^2 \left|\int_0^L \left(\d_x (\Psi_k(\d_x(\Psi_k u)))\right)(\d_x^2 \f) \, dx \right| + \ve \left|\int_0^L (\d_x^2 u )(\d_x^2 \f) \, dx \right| \leq C \|u\|_{2,2} \|\f\|_{2,2},
    \end{multline*}
for some $C>0$ and $\ve<1$. We are now in position to apply Banach-Steinhaus theorem to obtain
\[
\|A^\ve u\|_{L^2(\T)} \leq C \|u\|_{2,2}.
\]
Therefore, the conditions of Theorem 5.1.3 \cite{LiuRoch} hold, and the regularized problem \eqref{eq:stoch:reg} admits the unique solution for all $\ve \in (0,1]$. 
We now proceed with the estimate \eqref{4.11}:
\begin{multline}\label{4.36}
  \|w^\ve(t)\|_{1,2}^2 - \|w_0\|_{1,2}^2 = 2 \sum_k \lambda_k \int_0^t(\d_x (\Psi_k w^\ve(s)), w^\ve(s))_{1,2} d \beta^k(s) \\
  + 2 \sum_k \gamma_k \int_0^t (\Psi_k f(w^\ve(s)), w^\ve(s))_{1,2} d \beta_1^k(s) + \int_0^t (2 \langle \langle A^\ve w^\ve(s), w^\ve(s) \rangle \rangle + \|B w^\ve(s)\|_{L^2}^2) \, ds
\end{multline}
for all $t \in [0,\delta)$ almost surely. Furthermore, for $p \geq 4$, using Ito's formula for $\|u\|_{1,2}^p$, we have
\begin{eqnarray}\label{4.37}
  \nonumber  &  &\|w^\ve(t)\|_{1,2}^p   -  \|w_0\|_{1,2}^p = p \sum_k \lambda_k \int_0^t \|w^\ve(s)\|_{1,2}^{p-2}(\d_x(\Psi_k w^\ve(s)), w^\ve(s))_{1,2} \, d \beta^k(s) \\
  \nonumber & + &  p \sum_k \gamma_k \int_0^t \|w^\ve(s)\|_{1,2}^{p-2}(\Psi_k f(w^\ve(s)), w^\ve(s))_{1,2} d\beta_1^k(s)  \\
   \nonumber & + & \frac{p}{2} \int_0^t \|w^\ve(s)\|_{1,2}^{p-2} (2 \langle \langle A^\ve w^\ve(s), w^\ve(s) \rangle \rangle + \|B(w^\ve(s))\|_{L_2}^2) \, ds \\
  \nonumber & + & \frac{p(p-2)}{2} \sum_k \lambda_k^2 \int_0^t \|w^\ve(s)\|_{1,2}^{p-4}(\d_x(\Psi_k w^\ve(s)),w^\ve(s))^2_{1,2} \, ds \\
   & + & \frac{p(p-2)}{2} \sum_{k} \gamma_k^2 \int_0^t \|w^\ve(s)\|_{1,2}^{p-4} (\Psi_k f(w^\ve(s)), w^\ve(s))^2_{1,2} \, ds
\end{eqnarray}
where we used the definition of the norm $\|B(u)\|_{L_2}$. Next, using Burkholder-Davis-Gundy inequality:
\begin{eqnarray} \label{4.38}
  \nonumber  &  & \E \sup_{t' \in [0,t)} \left| \int_0^{t'} \|w^\ve(s)\|_{1,2}^{p-2} (\d_x(\Psi_k w^\ve(s)),w^\ve)_{1,2} \, d \beta^k(s) \right| \\
   \nonumber & \leq  & 3 \E \left(\int_0^t \|w^\ve(s)\|_{1,2}^{2p-4} \d_x (\Psi_k w^\ve(s), w^\ve(s))_{1,2}^2 \, ds\right)^{\frac{1}{2}} \\
   \nonumber & \leq  & C \E \left(\int_0^t \|w^\ve(s)\|_{1,2}^{2p} \right)^{\frac{1}{2}} \leq C \E \sqrt{\sup_{s \in [0,t]}\|w^\ve(s)\|_{1,2}^p \, \int_0^t \|w^\ve(s) \|_{1,2}^p \, ds} \\
& \leq &  \nu  \E \sup_{s \in [0,t]}\|w^\ve(s)\|_{1,2}^p + \frac{C}{\nu} \E  \int_0^t \|w^\ve(s)\|_{1,2}^p \, ds,
\end{eqnarray}
where $\nu>0$ can be chosen arbitrarily small, and $C$ is independent of $\nu$. Note that the expected values in \eqref{4.38} are finite due to \eqref{4.10}.\\

In a similar way, we get
\begin{multline}\label{4.39}
\E \sup_{\tau \in [0,t]} \left| \int_0^{\tau} \|w^\ve(s)\|_{1,2}^{p-2} (\Psi_k f(w^\ve(s)), w^\ve(s))_{1,2} d \beta_1^k(s)\right| \\
\leq 3 \E \left(\int_0^t \|w^\ve(s)\|_{1,2}^{2p-4} (\Psi_k f(w^\ve(s)), w^\ve(s))_{1,2} \, ds \right)^{\frac{1}{2}}.
\end{multline}
But
\begin{multline}\label{4.40}
(\Psi_k f(w^\ve(s)), w^\ve(s))_{1,2} = \int_0^L \Psi_k f(w^\ve(s)) w^\ve(s) \, ds \\
+ \int_0^L  \d_x (\Psi_k f(w^\ve(s))) \d_x w^\ve(s) \, ds := I_1 + I_2.
\end{multline}
The estimate for $I_1$ follows from the conditions imposed on $f$, namely
\[
|I_1| \leq C \|w^\ve(s)\|_2^2.
\]
Similarly,
\[
|I_2| \leq \left|\int_0^L [\d_x (\Psi_k) f(w^\ve(s)) + \Psi_k f'(w^\ve(s)) \d_x(w^\ve(s))]  \d_x w^\ve(s) \, ds\right| \leq C \|w^\ve \|_{1,2}^2.
\]
Thus, the expression in \eqref{4.39} is estimated with
\begin{equation}\label{4.42}
\E \sup_{\tau \in [0,t]} \left| \int_0^\tau \|w^\ve(s)\|_{1,2}^{p-2}(\Psi_k f(w^\ve(s)), w^\ve(s))_{1,2} d \beta_1^k(s) \E \|w^\ve(s)\|\right| \leq C \int_0^t \E \|w^\ve(s)\|_{1,2}^2 \, ds
\end{equation}
The third term in \eqref{4.37}, using the same approach as we used in verification of coercivity (H3), can be bound as follows:
\begin{equation}\label{4.43}
 \frac{p}{2} \int_0^t \|w^\ve(s)\|_{1,2}^{p-2} (2 \langle \langle A^\ve w^\ve(s), w^\ve(s) \rangle \rangle + \|B(w^\ve(s))\|_{L_2}^2) \, ds \leq \frac{pc}{2} \int_0^t \E \|w_0^\ve(s)\|_{1,2}^p \, ds.
\end{equation}
Furthermore, using the same estimates, the last two terms in \eqref{4.37} can be estimated the same way as in \eqref{4.43}. Hence, combining \eqref{4.37}- \eqref{4.39}, \eqref{4.42} and \eqref{4.43}, for sufficiently small $\nu$ we have
\begin{equation}\label{4.44}
\E \sup_{s \in [0,t]} \|w^\ve(s)\|_{1,2}^p \leq C\left(\E \|w_0\|_{1,2}^p + \int_0^t \E \sup_{\tau \in [0,s]} \|w^\ve(\tau)\|_{1,2}^p\right),
\end{equation}
where $C>0$ only depends on $L$. The bound \eqref{4.11} now follows from Gronwall's inequality for $p=2$ and for $p \geq 4$. If $p \in (2,4)$, the bound follows from Young inequality and the interpolation inequality
\[
\|u\|_{\tilde{p}}^{\tilde{p}} \leq \|u\|_{p_0}^{p_0(1-\theta)} \|u\|_{p_1}^{p_1 \theta}, \ \text{ where } \tilde{p} = (1-\theta) p_0 + \theta p_1, \ \theta \in (0,1),
\]
taking $p_0 = 2$ and $p_1 = 4$. \\
Let us proceed with the proof of the second statement of Proposition \ref{Prop4.1}. In order to simplify the notation, in this part of the proof we are going to omit the subscript  $\ve$. Introduce
\[
u^+ = 
\begin{cases}
u, \ u \geq 0;\\
0, \ u < 0,
\end{cases}
\ \text{ and } \ 
u^- = 
\begin{cases}
u, \ u < 0;\\
0, \ u \geq 0
\end{cases}
\]
This way $u = u^+ + u^-$. The function $z = -u$ satisfies

\begin{multline}
dz = \left(\frac{1}{2} \sum_k \lambda_k^2 \d_x(\Psi_k \d_x(\Psi_k z)) + \ve \d_x^2 z \right) \, dt + \sum_{k} \lambda_k \d_x (\Psi_k z) \, d \beta^k(t) \\
    - \sum_{k \in \mathbb{Z}} \gamma_k \Psi_k f(-z) d \beta_1^k(t),
\end{multline}
\\
with $z(0) = -u_0 := z_0.$ Let  $\f(y)=(y^+)^2$ for any $y \in \R$. Now, let $\psi(y)$ be a $C^{\infty}$ function, such that
\[
\psi(y) = 
\begin{cases}
    0, \ y \in (-\infty, 1];\\
    1, \ y \in [2, \infty).
\end{cases}
\]
Set $\f_n(y) = y^2 \psi(ny)$. Then 
\begin{equation}\label{zirochka}
\lim_{n \to \infty} \f_n(y) = \f(y)
\end{equation}
uniformly in $y \in \R$. Furthermore, for any $y \in \R$,
\begin{equation}\label{4.45}
\f'_n(y) \to 2 y^+ \text{ and } \f_n^{''}(y) \to 2 \chi_{y>0}, \ n\to \infty.
\end{equation}
Furthermore,
\begin{equation}\label{4.46}
0 \leq \f_n(y) \leq \f(y), \ 0 \leq \f_n^{'}(y) \leq C y, \ \|\f_n^{''}(y)\| \leq C,
\end{equation}
for all $y \geq 0$ and  $n \geq 1$. Next, we apply Ito's formula to $\int_0^L \f_n(z(t)) dx$:
\begin{eqnarray}
   \nonumber  &  & \int_0^L \f_n(z(t)) dx = \int_0^L \f_n(z_0(x)) dx + \sum_k \lambda_k \int_0^t \int_0^L \f^{'}_n(z(s))  \d_x (\Psi_k z(s))dx d \beta^k(s) \\
    \nonumber  &  &  - \sum_k \gamma_k \int_0^t \int_0^L \f^{'}_n(z(s)) \Psi_k f(-z) dx d \beta_1^k(s) \\
    \nonumber  &  & + \int_0^t \int_0^L \f^{'}_n(z(s)) \left(\frac{1}{2} \sum_k \lambda_k^2 \d_x(\Psi_k \d_x( \Psi_k z(s))) + \ve \d_x^2 z \right) \, dx ds \\
    \nonumber  &  & + \frac{1}{2}  \sum_k \lambda_k^2 \int_0^t \int_0^L \f^{''}_n(z(s))(\d_x(\Psi_k z(s)))^2 \, dx ds \\
    &  & + \frac{1}{2}  \sum_k \gamma_k^2 \int_0^t \int_0^L \f^{''}_n(z(s))(\Psi_k f(-z(s)))^2 \, dx ds.
\end{eqnarray}
Integrating by parts, we have
\begin{eqnarray}
 & &\int_0^L \f^{'}_n(z(s)) \d_x(\Psi_k \d_x(\Psi_k z(s))) \, dx  = - \int_0^L \f^{''}_n (z(s)) z_x \Psi_k \d_x(\Psi_k z(s)) \, dx  \\
  &  & = - \int_0^L \f^{''}_n(z(s)) z_x \Psi_k (\d_x(\Psi_k) z + \Psi_k z_x) \, dx  = - \int_0^L \f^{''}(z(s)) z_x \Psi_k \d_x(\Psi_k) z \, dx  \\ 
   &  & - \int_0^L \f^{''}_n (z(s)) z_x^2  \Psi_k^ 2  \, dx.
\end{eqnarray}
Since
\begin{multline*}
    \int_0^L \f^{''}_n (z(s)) \left(\d_x(\Psi_k z)\right)^2 \, dx = \int_0^L \f^{''}_n (z(s)) \left(\d_x(\Psi_k)\right)^2 z^2 \, dx + \int_0^L \f^{''}_n (z(s)) \Psi_k^2  z_x^2 \, dx \\
    + 2 \int_0^L \f^{''}_n (z(s)) \, \d_x(\Psi_k) \Psi_k z z_x \, dx,
\end{multline*}
the expression \eqref{4.46} has the form
\begin{eqnarray}\label{4.48}
\nonumber & & \int_0^L \f_n(z(t)) \, dx = \int_0^L \f_n(z_0) \, dx + \sum_k \lambda_k \int_0^t \int_0^L \f^{'}_n (z(s)) \d_x (\Psi_k z(s)) \, dx \, d \beta^k(s) \\
 \nonumber & & - \sum_{k} \gamma_k \int_0^t \int_0^L \f^{'}_n (z(s)) \Psi_k f(-z) \, dx d\beta_1^k(s) - \frac{1}{2} \sum_{k} \lambda_k^2 \int_0^t \int_0^L \f^{''}_n (z) \Psi_k (\d_x \Psi_k) z z_x \, dx ds \\
  \nonumber & & - \frac{1}{2} \sum_{k} \lambda_k^2 \int_0^t \int_0^L \f^{''}_n (z) \Psi_k^2 z_x^2  \, dx ds - \ve \int_0^t \int_0^L \f^{''}_n (z) z_x^2 \, dx ds \\
  \nonumber & & + \frac{1}{2} \sum_{k} \lambda_k^2 \int_0^t \int_0^L \f^{''}_n (z)\left[(\d_x \Psi_k)^2 z^2 + 2 (\d_x \Psi_k) \Psi_k z z_x + \Psi_k^2 z_k^2\right]\, dx ds \\
   \nonumber & & + \frac{1}{2} \sum_{k} \gamma_k^2 \int_0^t \int_0^L \f^{''}_n (z)(\Psi_k f(-z(s)))^2\, dx ds  \\
   \nonumber & &  = \int_0^L \f_n(z_0) \, dx + \sum_k \lambda_k \int_0^t \int_0^L \f^{'}_n (z(s)) \d_x (\Psi_k z(s)) \, dx \, d \beta^k(s) \\
 \nonumber & & - \sum_{k} \gamma_k \int_0^t \int_0^L \f^{'}_n (z(s)) \Psi_k f(-z) \, dx d\beta_1^k(s) - \frac{1}{2} \sum_{k} \lambda_k^2 \int_0^t \int_0^L \f^{''}_n (z) \Psi_k (\d_x \Psi_k) z z_x \, dx ds \\
  \nonumber & &  - \ve \int_0^t \int_0^L \f^{''}_n (z) z_x^2 \, dx ds + \frac{1}{2} \sum_{k} \lambda_k^2 \int_0^t \int_0^L \f^{''}_n (z)\left[(\d_x \Psi_k)^2 z^2 + 2 (\d_x \Psi_k) \Psi_k z z_x \right]\, dx ds \\
    & & + \frac{1}{2} \sum_{k} \gamma_k^2 \int_0^t \int_0^L \f^{''}_n (z)(\Psi_k f(-z(s)))^2\, dx ds.
\end{eqnarray}
Taking the expected value in \eqref{4.48}, we may use \eqref{zirochka}, \eqref{4.46}, the dominated convergence theorem, and \eqref{4.11}, to pass to the limit as $n \to \infty$:
\begin{eqnarray}\label{4.49}
\nonumber & & \E \int_0^L (z^+(t,x))^2 \, dx = \E \int_0^L (z^+_0(x))^2 \, dx - 2 \ve \int_0^t \E \left(\int_0^L \chi_{\{z(s)>0\}} z_x^2 dx \right)  ds \\
\nonumber & & +  \sum_{k} \lambda_k^2 \int_0^t \E \left( \int_0^L \chi_{\{z(s)>0\}} (\d_x \Psi_k)^2 z^2(s) dx \right) \, ds  \\
\nonumber & & +  \sum_{k} \lambda_k^2 \int_0^t \E \left( \int_0^L \chi_{\{z(s)>0\}} (\d_x \Psi_k) \Psi_k z z_x dx \right) \, ds  \\
\nonumber & & +  \sum_{k} \gamma_k^2 \int_0^t \E \left( \int_0^L \chi_{\{z(s)>0\}} (\Psi_k f(-z))^2 dx \right) \, ds \\
\nonumber & & = \E \int_0^L (z^+_0(x))^2 \, dx - 2 \ve \int_0^t \E \left(\int_0^L (z^+_x)^2 dx \right)  ds \\
\nonumber & & +  \sum_{k} \lambda_k^2 \int_0^t \E \left( \int_0^L  (\d_x \Psi_k)^2 (z^+(s))^2 dx \right) \, ds  \\
\nonumber & & +  \sum_{k} \lambda_k^2 \int_0^t \E \left( \int_0^L  \Psi_k (\d_x \Psi_k) z^+ z^+_x dx \right) \, ds  \\
\nonumber & & +  \sum_{k} \gamma_k^2 \int_0^t \E \left( \int_0^L (\Psi_k f(-z^+))^2 dx \right) \, ds \\
\nonumber & & := \E \int_0^L (z^+_0(x))^2 \, dx - 2 \ve \int_0^t \E \left(\int_0^L (z^+_x)^2 dx \right)  ds + J_1 + J_2 + J_3.
\end{eqnarray}
Using \eqref{1.3} and \eqref{1.5}, we have
\begin{equation}\label{4.50}
    J_1 \leq C \int_0^t \E \int_0^L (z^+(s))^2 dx ds.
\end{equation}
In order to estimate $J_2$, integrating by parts, we have
\[
\int_0^L  \Psi_k (\d_x \Psi_k) z^+ z^+_x dx  = \frac{1}{4}\int_0^L   \d_x (\Psi_k^2) \, \d_x (z^+(s))^2  dx  = -\frac{1}{4} \int_0^L  \d_x^2 (\Psi_k^2) (z^+(s))^2 dx.
\]
This way
\begin{equation}\label{4.51}
    J_2 \leq C  \int_0^t \E \int_0^L (z^+(s))^2 dx ds.
\end{equation}
In order to estimate $J_3$, we make use of the conditions on $f(u)$. We have
\begin{equation}\label{4.52}
    J_3 \leq \sum_{k} \lambda_k^2 \int_0^t \E \left( \int_0^L K^2 \Psi_k^2 (z^+(s))^2 dx \right
) \, ds \leq C \int_0^t \E \int_0^L (z^+(s))^2 dx \, ds.
\end{equation}
It follows from \eqref{4.49}-\eqref{4.52} we obtain
\begin{multline}
     \E \int_0^L (z^+(t,x))^2 \, dx \leq \E \int_0^L (z_0^+(x))^2 \, dx - 2 \ve \int_0^t \E \int_0^L (z^+(s))^2 dx ds \\
     + C_1 \int_0^t \E \left(\int_0^L (z^+(s))^2 \, dx \right) ds.
\end{multline}
   Hence, by Gronwall's inequality, we have
   \[
 \int_0^L (z^+(t,x))^2 \, dx \leq e^{C_1 t} \E \int_0^L (z_0^+(x))^2 \, dx  = 0,
   \]
since $z_0^+ =0$ a.s. Therefore, $z(t,x) \leq 0$ a.s., and hence $u(t,x) \geq 0$ a.s. Thus the second statement of Proposition \ref{Prop4.1} follows.
In order to prove the third statement, we need the following Lemma:
\begin{lemma}
    Let $w^\ve(t,x)$ be a non-negative solution of \eqref{eq:stoch:reg}. Then 
    \begin{equation}\label{4.53}
        \E \left(\int_0^L w^\ve(t,x) dx\right)^p \leq e^{Ct}\E \left(\int_0^L w_0(x) dx\right)^p
    \end{equation}
    for some $C > 0$ and $p \geq 2$.
    \end{lemma}
    \begin{proof}
        Consider $F:L^2(\T) \to \R$ given by
        \[
        F(w) := \left(\int_0^L w(x) dx\right)^p.
        \]
        By Ito's formula,
        \begin{eqnarray}
 \nonumber & &\left(\int_0^L w^\ve(t) \, dx \right)^p - \left(\int_0^L w_0 \, dx \right)^p = \frac{1}{2} \sum_k \lambda_k^2  \int_0^t p \left(\int_0^L w^\ve(s) \, dx\right)^{p-1} \\
 \nonumber & & \times \int_0^L \d_x (\Psi_k \d_x (\Psi_k w^\ve(s))) \, dx \, ds + \int_0^t p \left(\int_0^L w^\ve(s) \, dx\right)^{p-1} \int_0^L \ve (\d_x^2 w^\ve(s)) \, dx \, ds \\
  \nonumber & & + \sum_k \lambda_k \int_0^t\left( p \left(\int_0^L w^\ve(s) \, dx\right)^{p-1} \int_0^L \d_x(\Psi_k w^\ve(s)) dx\right) \, d \beta^k(s)\\
 \nonumber & &  + \sum_k \gamma_k \int_0^t \left( p \left(\int_0^L w^\ve(s) \, dx\right)^{p-1} \int_0^L  \Psi_k f(w^\ve(s)) \, dx\right) d \beta_1^k(s)\\
 \nonumber & & + \frac{1}{2} \sum_k  \lambda_k^2 \int_0^t\left( p(p-1) \left(\int_0^L w^\ve(s) \, dx\right)^{p-2} \left(\int_0^L \d_x(\Psi_k w^\ve(s)) dx\right)^2\right) \, d s \\
 \nonumber & & + \frac{1}{2} \sum_k  \gamma_k^2 \int_0^t\left( p(p-1) \left(\int_0^L w^\ve(s) \, dx\right)^{p-2} \left(\int_0^L \Psi_k f(w^\ve(s)) dx\right)^2\right) \, d s
  \end{eqnarray}
  The first, the second, the third and the fifth terms in this expansion are zero due to the periodic boundary conditions. Thus, taking the expected value, we have
  \begin{multline}
\E \left(\int_0^L w^\ve(t) \, dx \right)^p = \E \left(\int_0^L w_0 \, dx \right)^p \\
+ \frac{1}{2} \sum_k  \gamma_k^2 \int_0^t\left( p(p-1) \E \left(\int_0^L w^\ve(s) \, dx\right)^{p-2} \left(\int_0^L \Psi_k f(w^\ve(s)) dx\right)^2\right) \, d s
\end{multline}
Since 
\[
\left|\int_0^L \Psi_k f(w^\ve(s)) \, dx \right| \leq C \int_0^L |w^\ve(s))| \, dx  =  C \int_0^L w^\ve(s)) \, dx
\]
due to the condition \eqref{1.5}, the linear growth condition on $f(u)$, and non-negativity of $w^\ve(s)$, we have
\[
\E \left(\int_0^L w^\ve(t) \, dx \right)^p \leq  \E \left(\int_0^L w_0 \, dx \right)^p + C \int_0^t \E \left(\int_0^L w^\ve(t) \, dx \right)^p \, ds,
\]
and thus \eqref{4.53} follows from Gronwall Lemma. This completes the proof of the Lemma.
    \end{proof}
    We now obtain the inequality \eqref{4.13}.  Using Ito's formula we get for $t \in [0, \delta)$
    \begin{eqnarray}
     \nonumber & & \|\d_x w^\ve(t)\|_2^2 -  \|\d_x w_0\|_2^2  = 2 \sum_{k \in \Z} \lambda_k \int_0^t \left(\d_x^2 (\Psi_k w^\ve(s)), \d_x w^\ve(s)\right)_2 \, d \beta^k(s) \\
     \nonumber & & + 2 \sum_k \gamma_k \int_0^t \left(\d_x (\Psi_k f(w^\ve(s))), \d_x w^\ve(s)\right) d \beta_1^k(s) \\
     \nonumber & & + \int_0^t \left(2 \langle \d_x A^\ve w^\ve(s), \d_x w^\ve(s) \rangle + \left\|B w^\ve(s)\right\|^2_{L_2(H^2(\T), \dot{H}^1(\T)}\right) \, ds
    \end{eqnarray}
    For $p \geq 4$, using Ito's formula one again, we have
     \begin{eqnarray}
     \nonumber & & \|\d_x w^\ve(t)\|_2^p -  \|\d_x w_0\|_2^p = p \sum_{k \in \Z} \lambda_k \int_0^t \|\d_x w^\ve\|_2^{p-2} \left(\d_x^2 (\Psi_k w^\ve(s)), \d_x w^\ve(s)\right)_2 \, d \beta^k(s)  \\
     \nonumber & & + p \sum_k \gamma_k \int_0^t \|\d_x w^\ve\|_2^{p-2} \left(\d_x (\Psi_k f(w^\ve(s))), \d_x w^\ve(s)\right) d \beta_1^k(s) \\
     \nonumber & & + 
     \frac{p}{2} \int_0^t \|\d_x w^\ve\|_2^{p-2} \left(2 \langle \d_x A^\ve w^\ve(s), \d_x w^\ve(s) \rangle + \left\|B w^\ve(s)\right\|^2_{L_2(H^2(\T), \dot{H}^1(\T)}\right) \, ds \\
     \nonumber & & + \frac{p(p-2)}{2} \sum_k \lambda_k^2 \int_0^t  \|\d_x w^\ve\|_2^{p-4} \left(\d_x^2 (\Psi_k w^\ve(s)), \d_x w^\ve(s)\right)_2^2 \, ds \\
     \nonumber & & + \frac{p(p-2)}{2} \sum_k \gamma_k^2 \int_0^t  \|\d_x w^\ve\|_2^{p-4} \left(\d_x (\Psi_k f(w^\ve(s))), \d_x w^\ve(s)\right)_2^2 \, ds 
    \end{eqnarray}
    Taking the expected value yields
    \begin{eqnarray}\label{4.54}
      & & \E \|\d_x w^\ve(t)\|_2^p -  \E \|\d_x w_0\|_2^p \\
 \nonumber & & 
     = \frac{p}{2} \E \int_0^t \|\d_x w^\ve\|_2^{p-2} \left(2 \langle \d_x A^\ve w^\ve(s), \d_x w^\ve(s) \rangle + \left\|B w^\ve(s)\right\|^2_{L_2(H^2(\T), \dot{H}^1(\T)}\right) \, ds \\
     \nonumber & & + \frac{p(p-2)}{2} \sum_k \lambda_k^2 \E \int_0^t  \|\d_x w^\ve\|_2^{p-4} \left(\d_x^2 (\Psi_k w^\ve(s)), \d_x w^\ve(s)\right)_2^2 \, ds \\
     \nonumber & & + \frac{p(p-2)}{2} \sum_k \gamma_k^2 \E \int_0^t  \|\d_x w^\ve\|_2^{p-4} \left(\d_x (\Psi_k f(w^\ve(s))), \d_x w^\ve(s)\right)_2^2 \, ds 
     \end{eqnarray}
Integrating by parts, 
\begin{multline}\label{4.55}
    \left|\left(\d_x^2(\Psi_k w^\ve(s)), \d_x w^\ve(s)\right)_2 \right| = \left|\int_0^L (\d_x^2 \Psi_k) w^\ve \d_x w^\ve \, dx + \frac{3}{2}  \int_0^L (\d_x \Psi_k)(\d_x w^\ve(s))^2 \, dx \right| \\
    \leq C (\|w^\ve(s)\|_2  \|w_x^\ve(s)\|_2 + \|w_x^\ve\|_2^2)  = C \|w^\ve(s)\|_{1,2} \|w_x^2(s)\|_2
\end{multline}
Similarly, 
\begin{eqnarray}\label{4.56} 
\nonumber & &(\d_x (\Psi_k f(w^\ve(s))), w^\ve_x(s))_2  = \int_0^L [\d_x (\Psi_k) f(w^\ve(s))w_x^\ve(s) + \Psi_k f'(w^\ve(s)) w_x^\ve(s)]\, dx \\
 & & \leq C \int_0^L w^\ve(s)  w^\ve_x(s) \, dx + C \int_0^L (w_x^\ve(s))^2 \, dx \leq C \|w^\ve(s)\|_{1,2} \|w_x^2(s)\|_2,
\end{eqnarray}
where we used \eqref{1.5}, nonnegativity of $w^\ve$, and the linear growth of $f(u)$.
Using  the inequality 
\[
2 \langle \d_x A^\ve u, \d_x u \rangle + \|B(u)\|^2_{\mathcal{L}_2(H^2(\T) \times H^2(\T), \dot{H}_1(\T))} \leq C \|u\|^2_{1,2} - 2 \ve \|\d_x u\|_{1,2}^2,
\]
which we obtained in \eqref{4.27}, as well as \eqref{4.55} and \eqref{4.56}, we get
\begin{multline}
  \E \|\d_x w^\ve(t)\|_2^p -  \E \|\d_x w_0\|_2^p \leq C \frac{p}{2} \E \int_0^t \|\d_x w^\ve\|_2^{p-2} \|w^\ve(s)\|_{1,2}^2\, ds \\
     + C \frac{p(p-2)}{2} \sum_k (\lambda_k^2 + \gamma_k^2) \E \int_0^t  \|\d_x w^\ve\|_2^{p-2} \|w^\ve(s)\|_{1,2}^2 \, ds.
\end{multline}
By Poincare inequality 
\[
\|w^\ve(s)\|_{1,2} \leq C\left(\|\d_x w^\ve(s)\|_2 + \left|\int_0^L w^\ve(s) \, dx \right| \right)
\]
we have
\begin{multline}
  \E \|\d_x w^\ve(t)\|_2^p -  \E \|\d_x w_0\|_2^p \leq C_1 \int_0^t \E \|\d_x w^\ve(s)\|_2^p \, ds + C_2 \int_0^t \E  \|w_x^\ve\|_2^{p-2} \left(\int_0^L w^\ve(s) \, dx \right)^2 \, ds
\end{multline}
Using Young's inequality,
\[
\E \|\d_x w^\ve(t)\|_2^p \leq  \E \|\d_x w^\ve(0)\|_2^p + C_3 \int_0^t \E \|\d_x w^\ve(s)\|_2^p \, ds + C_4 \int_0^t \E \left(\int_0^L w^\ve(s) \, dx\right)^p \, ds
\]
Hence using the estimate \eqref{4.53},  we have 
\[
\E \|\d_x w^\ve(t)\|_2^p \leq \E \|\d_x w^\ve(0)\|_2^p + C_3 \int_0^t \E \|\d_x w^\ve(s)\|_2^p \, ds + C_4  e^\delta \E \left(\int_0^L w^\ve_0 \, dx\right)^p \, \delta
\]
The conclusion of Proposition \ref{Prop4.1} now follows from Gronwall's inequality. 

\end{proof}
{\bf Step 2.} We now return to the proof of Theorem \ref{Thm4.1}. Proposition \ref{Prop4.1} implies that the initial value problem \eqref{eq:stoch:reg} has a unique solution $w^\ve(t)$ for any $\ve>0$. Thus, for the sake of proving Theorem \ref{Thm4.1}, one needs to pass to the limit in \eqref{eq:stoch:reg} as $\ve \to 0$ and to show that $w^\ve(t)$ converges to $w(t)$, which solves \eqref{eq:stoch}. We shall need the analog of Lemma 4.4 \cite{Gess}:
\begin{lemma}\label{lem4.2}
    For any $p \in [2, \infty), \nu > 0$ and $q \in [p, \infty)$ there exists a constant $C>0$ such that for all $\gamma \in (0,1)$ we have
    \[
w^\ve \in L^p(\Omega, [0,1], \mathcal{F}, \lambda_{[0,1]}; B_{q}^{\frac{\gamma}{2} - \nu,q}([0,\delta), B_q^{\frac{1}{2} - 2 \gamma, q}(\T)))
    \]
    and
    \begin{equation}\label{4.57}
\E \|w^\ve\|^p_{B_{q}^{\frac{\gamma}{2} - \nu,q}([0,\delta), B_q^{\frac{1}{2} - 2 \gamma, q}(\T))} \leq C \E \|w_0\|_{1,2}^p
    \end{equation}
with $C$ independent of $\ve$. 
\end{lemma}
The definition of Besov space $B_{q}^{s,p}(\Omega, X)$, with $\Omega \in \R^d$ and Banach space $X$ can be found in \cite{5,56}.
\begin{proof}
    For any $\ve>0$ and $\f \in C^\infty(\T)$ we have
    \begin{eqnarray}\label{4.58}
 & &  (w^\ve(t),\f)_2 = (w_0, \f) - \frac{1}{2} \sum_k \lambda_k^2 \int_0^t (\Psi_k \d_x(\Psi_k w^\ve(s)), \d_x \f)_2, \, ds - \ve \int_0^t (\d_x w^\ve(s), \d_x \f)_2 \, ds\\
 & & \nonumber - \sum_k \lambda_k \int_0^t(\Psi_k w^\ve(s),\d_x \f)_2 d \beta^k(s) + \sum_k \gamma_k \int_0^t (\Psi_k f(w^\ve(s)), \f)_2 \, d \beta_1^k(s)
    \end{eqnarray}
Introduce the following notation:
\[
w_1^\ve = \frac{1}{2} \sum_k \lambda_k^2 \int_0^t \d_x(\Psi_k \d_x(\Psi_k w^\ve(s))) \, ds - \ve \int_0^t \d_x^2 w^\ve(s) \, ds
\]
\[
w_2^\ve = \sum_k \lambda_k \int_0^t \d_x(\Psi_k w^\ve(s)) \, d \beta^k(s)
\]
\[
w_3^\ve = \sum_k \gamma_k \int_0^t \Psi_k f(w^\ve(s)) \, d \beta_1^k(s)
\]
Using \eqref{4.58}, for $0 \leq t_1 \leq t_2 < \delta$
\begin{multline*}
\|w_1^\ve(t_2) - w_1^\ve(t_1)\|^p_{H^{-1}(\T)} \leq C \left( \sum_k \lambda_k^2 \int_{t_1}^{t_2}(\|w^\ve(s)\|_{1,2} + \ve \|w^\ve(s)\|_{1,2})\right)^p \\
\leq C|t_2-t_1|^p \sup_{t \in [0,\delta)}\|w^\ve(s)\|_{1,2}^p
\end{multline*}
almost surely. Then from \eqref{4.10} we obtain
\begin{equation}\label{4.59}
\|w_1^\ve\|_{L^p([0,1], \mathcal{F}, \lambda_{[0,1]}; C^{1-}([0,\delta];H^{-1}(\T))} \leq C \E \|w_0\|_{1,2}^p
\end{equation}
We next estimate Sobolev-Slobodeckij norm \cite{20} (Lemma 2.1) with $\alpha = \frac{1}{2} - \nu < \frac{1}{2}$:
\begin{multline}
    \left(\E \|w_2^\ve\|^p_{W^{\frac{1}{2} - \nu, q}([0, \delta);L^2(\T))}\right)^q \leq \E \|w_2^\ve\|^q_{W^{\frac{1}{2} - \nu, q}([0, \delta);L^2(\T))} \\
    \leq C \E \int_0^\delta \left(\sum_{k \in \Z} \lambda_k^2 \|\d_x(\Psi_k w^\ve(s))\|_2^2\right)^\frac{q}{2} \, ds \leq C \delta \E \sup_{t \in [0,\delta)}\|w^\ve(t)\|_{1,2}^q \leq C \delta \E \|w_0\|_{1,2}^q
\end{multline}
Here we used \eqref{4.11}. An analogous estimate, based on the linear growth of $f(u)$, will hold for $w_3^\ve$. The rest of the proof of the Lemma follows the lines of Lemma 4.4 \cite{Gess}. 
\end{proof}
We now proceed with the proof of Theorem \ref{Thm4.1}. Let us choose the following parameters in \eqref{4.57}: $ \kappa \in [0,\frac{1}{2})$, $\nu \in \left(0, \frac{\kappa}{2} \right)$, $p = 2$, $q \in [2, \infty)$. Using the inequality \eqref{4.57}, and Markov's inequality, for $R>0$ we have
\[
\lambda_{[0,1]} \{\|w^\ve\|_{B_q^{\frac{\kappa}{2} - \nu, q}([0, \delta); B^{\frac{1}{2} - 2 \kappa,q}(\T))} > R\} \leq \frac{C^2}{R^2} \E \|w_0\|_{1,2}^p \to 0 \text{ as } R \to \infty.
\]
Thus the set 
\[
\{w: \|w^\ve\|_{B_q^{\frac{\kappa}{2} - \nu, q}([0, \delta); B^{\frac{1}{2} - 2 \kappa,q}(\T))} \leq R\}
\]
is compact in $BC^0([0,\delta) \times \T)$ when $\kappa< \frac{1}{4}$ and $q > \max\{\frac{2}{\kappa - 2\nu}, \frac{2}{1 - 4\kappa}\}$, \cite{Amann}, Th.4.4.
The set of measures 
\[
\mu_\ve(A) = \lambda_{[0,1]}\{\omega: w^\ve(t,x,\omega) \in A\}, \ A \subset BC^{0}([0,\delta) \times \T) 
\]
is tight in $BC^{0}([0,\delta) \times \T)$. Thus, by Skorokhod's Theorem \cite{Skor}, there exists a random variable $\tilde{w}^\ve$, and Wiener process $\tilde{\overline{W}}^\ve(t)$ on $([0,1], B([0,1], \lambda_{[0,1]})$ such that  $(\tilde{w}^\ve, \tilde{\overline{W}}^\ve) \sim (\w^\ve, \bar{W})$ and $\tilde{w}^\ve \to \tilde{w}$ a.s. in $BC^0([0,\delta) \times \T)$, $\tilde{\overline{W}}^\ve \to \bar{W}$ a.s. in $BC^0([0,\delta); H^2(\T))$ as $\ve \to 0$. Let us show that $\tilde{w}$ is the solution of the limiting equation \eqref{eq:stoch} with Wiener process $\tilde{W}$. Introduce the following real-valued processes:
\begin{eqnarray}
     & & \nonumber  \beta^k(t) = \lambda_k^{-1}(W(t),\Psi_k)_{2,2}, \\
     & & \nonumber  \beta^k_1(t) = \gamma_k^{-1}(W_1(t),\Psi_k)_{2,2}, \\
     & & \nonumber  \tilde{\beta}^k(t) = \lambda_k^{-1}(\tilde{W}(t),\Psi_k)_{2,2}, \\
     & & \nonumber  \tilde{\beta}^k_\ve(t) = \lambda_k^{-1}(\tilde{W}^\ve(t),\Psi_k)_{2,2}, \\
     & & \nonumber  \beta^k_{1,\ve}(t) = \gamma_k^{-1}(\tilde{W}_1^\ve(t),\Psi_k)_{2,2}, \\
     & & \nonumber  \beta^k(t) = \gamma_k^{-1}(\tilde{W}_1(t),\Psi_k)_{2,2}.
\end{eqnarray}
Additionally, we denote
\[
\bar{W}^\ve = (\tilde{W}^\ve, \tilde{W}_1^\ve), \ \bar{W} = (\tilde{W}, \tilde{W}_1)
\]
Similarly to Proposition 5.3 \cite{Gess} we can show that that the processes $\tilde{\beta}^k$ and $\tilde{\beta}^k_1$ are mutually independent standard real-valued $\tilde{F}_t$-Wiener processes. Here 
\[
\tilde{F}_t = \sigma((\tilde{w}(s), \tilde{\overline{W}}(s)), s \in [0,t])
\]
Since $\mathcal{L}(w^\ve, \bar{W}) = \mathcal{L}(\tilde{w}^\ve, \tilde{\overline{W}}^\ve)$, using \eqref{4.58} we have 
\begin{eqnarray}\label{4.60}
     & & \nonumber (\tilde{w}^\ve(t), \f) - (w_0,\f) = -\frac{1}{2} \sum_k \lambda_k^2 \int_0^t (\Psi_k \d_x (\Psi_k \tilde{w}^\ve(s)), \d_x \f)_2 \, ds - \ve \int_0^t (\d_x \tilde{w}^\ve(s), \d_x \f)_2 \, ds \\
     & &  - \sum_k \lambda_k \int_0^t (\Psi_k w^\ve(s), \d_x \f)_2 d \tilde{\beta}_\ve^k(s) + \sum_k \gamma_k \int_0^t (\Psi_k f(\tilde{w}^\ve(s)), \f)_2 d \tilde{\beta}_{1,\ve}^k(s).
\end{eqnarray}
Passing to the limit as $\ve \to 0$, we obtain the desired result. In view of the convergence $(\tilde{w}^\ve(t), \f) \to (\tilde{w},\f)_2, \ve \to 0$, we have
\begin{eqnarray}
     & & \nonumber \sum_k \lambda_k^2 \int_0^t (\Psi_k \d_x (\Psi_k \tilde{w}^\ve(s)), \d_x \f)_2 \, ds = - \sum_k \lambda_k^2 \int_0^t (\tilde{w}^\ve(s), \Psi_k \d_x (\Psi_k \d_x \f)_2)_2 \, ds \\
     & & \nonumber \to - \sum_k \lambda_k^2 \int_0^t (\tilde{w}(s), \Psi_k \d_x (\Psi_k \d_x \f)_2)_2 \, ds = \sum_k \lambda_k^2 \int_0^t (\Psi_k \d_x (\Psi_k \tilde{w}(s)), \d_x \f)_2 \, ds \text{ as }  \ve \to 0.
\end{eqnarray}
Integrating by parts, we have
\[
\ve \int_0^t \d_x \tilde{w}^\ve(s), \d_x \f) \, ds = - \ve \int_0^t (\tilde{w}^\ve(s), \d_x^2 \f)_2 \to 0, \ \ve \to 0,
\]
since there exists a random constant $C(\omega)>0$ such that
\[
\sup_{t \in [0,\delta), \ x \in \T} |\tilde{w}^\ve(t,x,\omega)| \leq C(\omega) \text{ a.s. }
\]
Let us show the convergence of the two last terms in \eqref{4.60}. To this end, we rewrite the in the form
\begin{multline}\label{4.63}
    \sum_k \left[\int_0^t \gamma_k (\Psi_k f(\tilde{w}^\ve(s)),\f)_2 d \tilde{\beta}_{1,\ve}^k - \lambda_k \int_0^t (\Psi_k w^\ve(s), \d_x \f)_2 d \tilde{\beta}_\ve^k(s)\right] 
    = (\tilde{w}^\ve(t), \f)_2 - (w_0,\f)_2  \\
    + \ve \int_0^t (\d_x (\tilde{w}^\ve(s)), \d_x \f)_2 \, ds + \frac{1}{2} \sum_k \lambda_k^2 \int_0^t (\Psi_k \d_x (\Psi_k \tilde{w}^\ve(s)),\d_x \f)_2 \, ds = \tilde{M}_\ve(t)
\end{multline}
Thus $\tilde{M}_\ve(t)$ is a martingale with respect to sigma-algebra $\tilde{F}_{t,\ve} = \sigma((\tilde{w}^\ve(s), \tilde{W}(s)), s \in [0,t])$. Similarly to Lemma 5.7 \cite{Gess} one can show that the quadratic variation of the process satisfies the following bound:
\begin{multline}
\langle \langle \tilde{M} \rangle \rangle_t = \sum_k \gamma_k^2 \int_0^t (\Psi_k f(\tilde{w}^\ve(s)), \f)_2^2 \, ds + \sum_k \lambda_k^2 \int_0^t (\Psi_k \tilde{w}^\ve(s), \d_x \f)_2^2 \, ds \\
\leq C \|\f\|_{1,2}^2 \int_0^t\|\tw^\ve(s)\|_2^2 \, ds.
\end{multline}
This way
\[
\E \left(\langle \langle \tilde{M} \rangle \rangle_t\right)^p \leq C t^p \|\f\|_{1,2}^{2p} \E \sup_{s \in [0,\delta)}(\|\tw^\ve(s)\|_{1,2}^{2p}) \leq C \delta^p \|\f\|_{1,2}^{2p} \E \|w_0\|_{1,2}^{2p} \text{ for } p \geq 1,
\]
where we used \eqref{4.11} and the fact that the distributions for $\tw^\ve$ and $w^\ve$ coincide. Thus $\tilde{M}_\ve$ is a square integrable martingale. Thus, it follows from \eqref{4.63} that
\[
\tilde{M}_\ve(t) \to \tilde{M}(t) = (\tilde{w}(t),\f)_2 - (w_0,\f) + \frac{1}{2} \sum_k \lambda_k^2 \int_0^t (\Psi_k \d_x (\Psi_k \tw(s)), \d_x \f)_2 \, ds
\]
almost surely as $\ve \to 0.$ It remains to show that
\begin{equation}\label{4.65}
\tilde{M}(t) = - \sum_{k \in \Z} \lambda_k \int_0^t(\Psi_k \tw(s), \d \f)_2 d \tilde{\beta}^k(s) + \sum_k \gamma_k \int_0^t(\Psi_k f(\tw(s)),\f)_2 d \tilde{\beta}_1^k(s).
\end{equation}
To this end, we will use \cite{35}, Proposition A1. $\tilde{M}_\ve$ is a square integrable martingale, which is expressed via the stochastic integral \eqref{4.63}. For $0 \leq t_1 \leq t < \delta$ and any bounded continuous $\tilde{F}_{\ve,t}$-measurable function 
\[
\Phi: BC^0([0,t_1] \times \T) \times BC^0([0,t_1]; H^2(\T)) \times [0,1] \to \R
\]
we have 
\begin{equation}\label{4.64}
    \E [(\tilde{M}_\ve(t) - \tilde{M}_\ve(t_1)) \tilde{\Phi}_\ve] = 0
\end{equation}
\begin{multline}\label{4.65}
    \E [((\tilde{M}_\ve(t))^2 - (\tilde{M}_\ve(t_1))^2 - \sum_k \lambda_k^2 \int_{t_1}^{t} (\Psi_k \tw^\ve(s),\d_x \f)_2^2 \, ds \\
    - \sum_k \gamma_k^2 \int_{t_1}^{t} (\Psi_k f(\tw^\ve(s)), \f)_2^2 \, ds ) \tilde{\Phi}_\ve] = 0
\end{multline}
\[
\E \left[\left(\tilde{\beta}_{\ve}^k(t) \tilde{M}_\ve (t) - \tilde{\beta}_{\ve}^k(t_1) \tilde{M}_\ve(t_1) + \lambda_k \int_{t_1}^{t} (\Psi_k \tw^\ve(s), \d_x \f)_2 \, ds \right)  \tilde{\Phi}_\ve \right] = 0
\]
\[
\E \left[\left(\tilde{\beta}_{1,\ve}^k(t) \tilde{M}_\ve (t) - \tilde{\beta}_{1,\ve}^k(t_1) \tilde{M}_\ve(t_1) - \gamma_k \int_{t_1}^{t} (\Psi_k w^\ve(s), \d_x \f)_2 \, ds \right)  \tilde{\Phi}_\ve \right] = 0
\]
where 
\[
\tilde{\Phi}_\ve = \Phi\left(\tw^\ve|_{[0,t_1]}, \tilde{W}^\ve|_{[0,t_1]}\right).
\]
We may now pass to the limit as $\ve \to 0$ in \eqref{4.65} the same way as in Lemma 5.7 \cite{Gess}. Similar identities hold for the limiting processes $\tilde{w}(t)$, $\tilde{M}(t)$, $\tilde{\beta}^k(t)$ and $\tilde{\beta}^k_1$. Thus \eqref{4.65} follows from \cite{35}, Proposition A1. Thus we have the existence of the solution of the initial value problem \eqref{eq:stoch}. Non-negativity of this solution follows from non-negativity $\tilde{w}^\ve(t)$. The bounds \eqref{eq:apr:est1} follow from \eqref{4.11} and \eqref{4.13}, which are uniform in $\ve$. Similarly we have the estimate for its mass from \eqref{4.53}. This completes the proof of Theorem \ref{Thm4.1}.
\end{proof}
\section{Proof of Theorem \ref{Thm2.1}.}\label{Sec5}
\subsection{Estimates for the approximate solutions.} Let $\{j\delta, j = 1,...,N+1\}$ with $\delta = \frac{T}{N+1}$ be a partition of $[0,T]$. According to Theorem \ref{Thm3.1}, on every interval $[(j-1) \delta, j \delta)$ 
there exists a non-negative solution $v_N$ of the equation \eqref{DynamicsD} (in the sense of Definition \ref{Def3.1}) for any nonnegative $H^1(\T)$ initial condition. Similarly, it follows  from Theorem \ref{Thm4.1} that on every interval $[(j-1)\delta, j \delta]$ there is a nonnegative martingale solution $(\tilde{w}_j, \tilde{W}_j)$ of \eqref{DynamicsS} for any  nonnegative initial process from 
$L^p([0,1], \mathcal{F}_0, \mathcal{P}, H^1(\T))$, in the sense of Definition \ref{Def4.1}. Here $\mathcal{P} = \lambda_{[0,1]}$, 
\[
\tilde{W}_j = \left(\sum_{k \in \Z} \tilde{\beta}_j^k \Psi_k, \sum_{k \in \Z} \tilde{\beta}_{1,j}^k \Psi_k\right)
\]
and the processes $\tilde{\beta}_j^k$ and $\tilde{\beta}_{1,j}^k$ are defined analogously to $\tilde{\beta}^k$ and $\tilde{\beta}_{1}^k$ via \eqref{4.59} on the interval $[(j-1)\delta, j\delta]$. \\

Let us now describe the construction of the solution $w_N$ on $[0,T]$. In view of Theorem \ref{Thm4.1} there is a martingale solution $w_N^1$ on $[0,\delta)$, with the initial condition $w_N^0(0):=v_N(\delta-0)$, defined on $(\Omega^1 =[0,1], \mathcal{F}^1, \mathcal{F}^1_t, \lambda^1_{[0,1]})$ with the Wiener process $\tilde{\overline{W}}_1$ and filtration $\tilde{F}_t^1 = \sigma(\tilde{\overline{W}}_1(s), s \leq t)$ for $t \in [0,\delta)$. Introduce the new probability space $\Omega^2 := [0,1]\times 0,1] = \{(\omega_1, \omega_2), \omega_1 \in [0,1], \omega_2 \in [0,1]\},$ with the measure $\lambda^2 = \lambda_{[0,1]}(\omega_1)\lambda_{[0,1]}(\omega_2)$, $\mathcal{F}^2 = \mathcal{F}^1(\omega_1) \times \mathcal{F}^2(\omega_2)$. We define the new Wiener process $\tilde{\overline{W}}'(t, \omega_2)$ and then construct the Wiener process $\tilde{\overline{W}}(t, \omega_1, \omega_2)$ on $[0,2 \delta)$ as follows:
\[
\tilde{\overline{W}}(t, \omega_1, \omega_2) = 
\begin{cases}
    \tilde{\overline{W}}_1(t, \omega_1), \ t \in [0,\delta),\\
     \tilde{\overline{W}}'(t-\delta,\omega_2) + \tilde{\overline{W}}_1(\delta, \omega_1), \ t \in [\delta, 2\delta).
\end{cases}
\]
For any fixed $\omega_1$, the equation \eqref{DynamicsS} has a martingale solution $\eta(t)$ on $[\delta, 2 \delta)$ with the initial condition $\eta(\delta, \omega_1, \omega_2) = v_N(2\delta - 0,\omega_1)$. This way the equation \eqref{DynamicsS} has the following non-negative martingale solution on $[0,2\delta)$:
\[
W_N^2(t, \omega_1, \omega_2) = 
\begin{cases}
    W_N^1(t,\omega_1), \ t \in [0,\delta),\\
    \eta(t, \omega_1, \omega_2), \ t \in [\delta, 2\delta).
\end{cases}
\]
with filtration $\mathcal{F}_t^2 = \sigma((\tilde{\overline{W}}(s), W_N^2(s)), s \leq t)$ with $t \in [0,2 \delta)$. In other words, we have the martingale solution of \eqref{DynamicsS} on $[0, 2\delta)$ with some Wiener process $\tilde{\overline{W}}_2(t, \omega_1, \omega_2)$, defined on $\Omega^2$ for $t \in [0, 2 \delta)$. Continuing this procedure, we have the existence of a martingale solution $w_N(t)$ on $[0,T)$, defined on $\Omega^N$ with some Wiener process

\begin{equation}\label{5.1}
\overline{W}_N(t) = \left(\sum_{k \in \Z} \beta_N^k \Psi_k, \sum_{k \in \Z} \beta_{1,N}^k \Psi_k\right) 
\end{equation}
Then the equation \eqref{DynamicsS} can be rewritten in the form
\begin{multline}\label{5.2}
   (w_N(t, \cdot),\f)_2 - (w_N((j-1)\delta, \f)_2  = - \frac{1}{2} \sum_{k\in \Z} \lambda_k^2 \int_{(j-1)\delta}^t (\Psi_k \d_x(\Psi_k w_N(s,\cdot)), \d_x \f)_2 \, ds \\
   - \sum_{k\in \Z} \lambda_k \int_{(j-1)\delta}^t (\Psi_k w_N(s,\cdot), \d_x \f)_2 \, d \beta_N^k(s) + \sum_{k\in \Z} \gamma_k \int_{(j-1)\delta}^t (\Psi_k f(w_N(s,\cdot)), \f)_2 \, d \beta_{1,N}^k(s)
\end{multline}
Using Theorems \eqref{Thm3.1} and \eqref{Thm4.1} the limits $w_N((j-1)\delta, \cdot) = \lim_{t \to j\delta} v_N(t, \cdot)$ and $v_N(j\delta, \cdot) = \lim_{t \to j\delta} w_N(t,\cdot)$  exist almost surely. Furthermore, 
\[
\E \|\d_x w_N((j-1)\delta, \cdot)\|_2^p< \infty
\]
and
\[
\|\d_x v_N(j \delta, \cdot)\|_2^p < \infty
\]
for $j \in \{1, ..., N+1\}$.
In a similar way to \cite{Gess}, we define the concatenated approximate solution $u_N:[0,T) \times \T \times [0,1] \to 
[0,\infty)$ by
\begin{equation}\label{5.3}
u_N(t,\cdot):=
\begin{cases}
v_N(2t - (\gamma-1)\delta, \cdot), \ t \in [(j-1)\delta, (j- \frac{1}{2})\delta),\\
w_N(2t - j\delta, \cdot), \ t \in [(j- \frac{1}{2})\delta, j \delta), \ \ j=1, ..., N+1.
\end{cases}
\end{equation}
It follows from Theorem \ref{Thm3.1} that the mass of $v_N$ is non-increasing, that is 
\begin{equation}\label{5.4}
\int_0^L v_N(t,x) \, dx  \leq \int_0^L v_N((j-1)\delta,x) \, dx, \text{ for } t \in [(j-1)\delta, j \delta)
\end{equation}
Similarly, by Theorem \ref{Thm4.1} we get
\begin{equation}\label{5.5}
\E \left(\int_0^L w_N(t,x) \, dx \right)^p \leq e^{Ct} \E \left(\int_0^L w_N((j-1)\delta,x) \, dx \right)^p, \ t \in [(j-1)\delta, j \delta).
\end{equation}
It follows from \eqref{DynamicsD}, \eqref{DynamicsS}, \eqref{5.4} and \eqref{5.5}, for $t \in [(j-1)\delta, j \delta)$
\[
\E \left|\int_0^L w_N(t,x)\, dx\right|^p \leq e^{C\delta j} \left|\int_0^L u_0(x) \, dx \right|^p,
\]
and
\[
\E \left|\int_0^L v_N(t,x)\, dx\right|^p \leq e^{C\delta j} \left|\int_0^L u_0(x) \, dx \right|^p.
\]
So for all $t \in [0,T)$ we obtain
\begin{equation}\label{5.6}
    \E \left|w_N(t,x) \, dx \right|^p + \E \left|v_N(t,x) \, dx \right|^p + \E \left|u_N(t,x) \, dx \right|^p \leq A \left|u_0(x) \, dx \right|^p
\end{equation}
for some positive constant $A$.
\begin{proposition}
For any $p \in [2, \infty)$, there exists a constant $C>0$ such that for all $N \in \N$ we have
\[
v_N, w_N \text{ and } u_N \in L^p([0,1], \mathcal{F}, \lambda_{[0,1]}, L^\infty([0,T]; H^1(\T)))
\]
with 
\begin{multline}\label{5.7}
{\rm esssup}_{t \in [0,T)}\|u_N(t)\|_{1,2}^p + {\rm esssup}_{t \in [0,T)}\|u_N(t)\|_{1,2}^p + {\rm esssup}_{t \in [0,T)}\|u_N(t)\|_{1,2}^p  \\
+ \E \int_0^T \|v_N(t)\|_{1,2}^{p-2} \int_{v_N(t)>0} (v_N \d_x^3 v_N)^2\, dx \, dt \leq C\|u_0\|_{1,2}^p.
\end{multline}
\end{proposition}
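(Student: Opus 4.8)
The plan is to reduce everything to a single global‑in‑time energy estimate for the concatenated process $u_N$ and to read off the statements for $v_N$ and $w_N$ afterwards. By the construction \eqref{5.3} every value taken by $v_N$ or by $w_N$ on $[0,T)\times[0,1]$ is also taken by $u_N$, so ${\rm esssup}_{t\in[0,T)}\|v_N(t,\cdot)\|_{1,2}\le{\rm esssup}_{t\in[0,T)}\|u_N(t,\cdot)\|_{1,2}$ pathwise and likewise for $w_N$; hence it suffices to bound $\E\,{\rm esssup}_{t\in[0,T)}\|u_N(t,\cdot)\|_{1,2}^p$ and, separately, the dissipation integral. Throughout I would use the checkpoint values $z_0:=u_0$, $z_{j-\frac12}:=\lim_{t\to j\delta-}v_N(t,\cdot)$, $z_j:=\lim_{t\to j\delta-}w_N(t,\cdot)$, so that $z_{j-\frac12}$ arises from $z_{j-1}$ by running \eqref{DynamicsD} over a time interval of length $\delta$ and $z_j$ from $z_{j-\frac12}$ by running \eqref{DynamicsS} over a time interval of length $\delta$. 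The mass estimate \eqref{5.6} (assembled from \eqref{5.4} and \eqref{5.5}) is already available and gives $\sup_{t\in[0,T)}\E\big(\int_0^L u_N(t,x)\,dx\big)^p\le A\|u_0\|_{1,2}^p$ with $A$ independent of $N$; write $M:=A\|u_0\|_{1,2}^p$. The first quantitative step is then to propagate $\E\|\d_x z_i\|_2^p$ along the partition: by \eqref{pth_est} the deterministic step does not increase it, $\E\|\d_x z_{j-\frac12}\|_2^p\le\E\|\d_x z_{j-1}\|_2^p$, and by \eqref{eq:apr:est2} the stochastic step gives $\E\|\d_x z_j\|_2^p\le e^{C_2\delta}\big(\E\|\d_x z_{j-\frac12}\|_2^p+C_3\delta M\big)$, the mass entering the $j$‑th stochastic step having $p$‑th moment $\le M$ since deterministic steps do not increase the mass. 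Iterating and using $(N+1)\delta=T$ yields $\sup_{0\le i\le N+1}\E\|\d_x z_i\|_2^p\le e^{C_2T}\|\d_x u_0\|_2^p+C_3MTe^{C_2T}\le C\|u_0\|_{1,2}^p$ uniformly in $N$, and combining with the mass bound gives the analogous estimate for $\E\|z_i\|_{1,2}^p$ and for $\E\|z_{j-\frac12}\|_{1,2}^p$.

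The supremum bound over the whole interval $[0,T)$ must be obtained in one stroke, since iterating the per‑step inequality \eqref{eq:apr:est1} is not an option: its constant $C_1$ does not tend to $1$ as $\delta\to0$, and chaining it $N+1$ times would produce the useless factor $C_1^{N+1}$. Instead one writes a single identity for $\|u_N(t)\|_{1,2}^p$ on $[0,T)$: on each deterministic sub‑interval this quantity only decreases (by \eqref{pth_est}), contributing nothing positive plus the nonnegative dissipation integral, while on each stochastic sub‑interval It\^o's formula produces a stochastic‑integral term together with a drift that — exactly as in the coercivity estimate \eqref{4.27}, followed by a Young splitting of the mass cross‑term as in the proof of \eqref{eq:apr:est2} — is bounded by $C\big(\|u_N(s)\|_{1,2}^p+(\int_0^L u_N(s)\,dx)^p\big)$. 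Because the stochastic‑integral terms vanish on the deterministic pieces, they assemble into a single martingale $M_N$ on $[0,T)$ with $\langle\langle M_N\rangle\rangle_t\le C\int_0^t\|u_N(s)\|_{1,2}^{2p}\,ds$. Taking $\E\sup_{t\in[0,T)}$, estimating $\E\sup|M_N|$ by the Burkholder--Davis--Gundy and Young inequalities so as to absorb a small multiple of $\E\sup_{[0,T)}\|u_N\|_{1,2}^p$, inserting \eqref{5.6} for the mass drift, and closing by Gronwall's inequality, one arrives at $\E\sup_{t\in[0,T)}\|u_N(t)\|_{1,2}^p\le Ce^{CT}\|u_0\|_{1,2}^p$ with $C$ independent of $N$; by the range inclusion above this also bounds the $v_N$ and $w_N$ supremum terms. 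As in Proposition~\ref{Prop4.1}, the cases $p=2$ and $p\ge4$ use It\^o's formula directly and $p\in(2,4)$ follows by interpolation.

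For the dissipation term one sums \eqref{pth_est} over the deterministic steps of $v_N$: since $\|\d_x v_N(j\delta-)\|_2^p=\|\d_x z_{j-\frac12}\|_2^p$ and $\|\d_x v_N((j-1)\delta)\|_2^p=\|\d_x z_{j-1}\|_2^p$, one obtains pathwise
\[
2\int_0^T\|\d_x v_N(s)\|_2^{p-2}\!\!\int_{v_N(s)>0}\!\!v_N^2(\d_x^3 v_N)^2\,dx\,ds\;\le\;\|\d_x u_0\|_2^p+\sum_{j=1}^{N}\big(\|\d_x z_j\|_2^p-\|\d_x z_{j-\frac12}\|_2^p\big),
\]
and by \eqref{eq:apr:est2} and the previous step each summand has expectation $\le(e^{C_2\delta}-1)\E\|\d_x z_{j-\frac12}\|_2^p+C_3\delta e^{C_2\delta}M\le C\delta\|u_0\|_{1,2}^p$, so after summation ($N\delta\le T$) the right‑hand side has expectation $\le C\|u_0\|_{1,2}^p$ uniformly in $N$. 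The weight $\|v_N\|_{1,2}^{p-2}$ required in \eqref{5.7} is then recovered from $\|v_N\|_{1,2}\le C\big(\|\d_x v_N\|_2+|\int_0^L v_N\,dx|\big)$ (Poincar\'e) by pulling the mass factor out under a supremum and applying Young's inequality, together with the $\tfrac p2$‑th moment of $\int_0^T\!\!\int_{v_N>0}v_N^2(\d_x^3 v_N)^2\,dx\,dt$, which is controlled by the same telescoping (now of $\|\d_x z\|_2^2$) plus the martingale estimate of the preceding paragraph.

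The only genuinely delicate point in the whole scheme is the insistence on uniformity in $N$: the per‑step estimates of Theorems~\ref{Thm3.1} and~\ref{Thm4.1} must not be chained multiplicatively, because the $\sup$‑bound \eqref{eq:apr:est1} loses a fixed multiplicative constant at every stochastic step. What rescues the argument are the two structural facts emphasized in the paper, namely that the mass grows by only the factor $e^{C\delta}$ per stochastic step — hence by $e^{CT}$ over the whole partition, giving the uniform mass bound \eqref{5.6} — and that the stochastic corrections coming from the $N+1$ separate stochastic sub‑intervals glue into a single martingale on $[0,T)$, so that one Burkholder--Davis--Gundy/Gronwall estimate, rather than $N+1$ of them, suffices.
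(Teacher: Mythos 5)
Your proposal is correct, and its skeleton coincides with the paper's: both arguments propagate $\E\|\d_x\cdot\|_2^p$ along the checkpoints by combining the non-expansive deterministic estimate \eqref{pth_est} with the $e^{C_2\delta}$-growth estimate \eqref{eq:apr:est2} and the uniform mass bound \eqref{5.6}, and both obtain the dissipation integral by telescoping \eqref{pth_est} across the deterministic subintervals while absorbing the per-step loss $(e^{C_2\delta}-1)\E\|\d_x v_N\|_2^p+C\delta\|u_0\|_{1,2}^p$ incurred on each stochastic subinterval and summing over $j$ using $N\delta\le T$. Where you genuinely depart from the paper is the passage from the checkpoint bounds \eqref{5.10} to the time-global quantity $\E\,{\rm esssup}_{t\in[0,T)}\|u_N(t)\|_{1,2}^p$: the paper invokes the per-subinterval estimate \eqref{eq:apr:est1} on each $[(j-1)\delta,j\delta)$, which gives $\E\sup_{t\in I_j}\|w_N(t)\|_{1,2}^p\le C\|u_0\|_{1,2}^p$ for each $j$ separately but leaves implicit how the expectation of the maximum over the $N+1$ subintervals is controlled uniformly in $N$ (note $\E\max_j X_j$ is not bounded by $\max_j\E X_j$); your replacement --- one pathwise inequality on all of $[0,T)$, in which the deterministic pieces only decrease $\|\d_x\cdot\|_2^p$ and the stochastic integrals glue into a single martingale treated by one Burkholder--Davis--Gundy/Gronwall step --- closes exactly this point and is the more robust route. (Your stated reason, that \eqref{eq:apr:est1} must not be chained multiplicatively, is correct, though to be fair the paper does not chain it either: its iteration runs through \eqref{eq:apr:est2}.) Two places to tighten: (i) the weight in \eqref{5.7} is $\|v_N\|_{1,2}^{p-2}$ whereas \eqref{pth_est} produces $\|\d_x v_N\|_2^{p-2}$, so the Poincar\'e/Young conversion you only sketch, together with the $\frac{p}{2}$-th moment of the unweighted dissipation integral, needs to be written out (the paper is equally terse here); (ii) if you apply It\^o's formula to $u_N$ directly you must account for the factor $2$ from the time reparametrization in \eqref{5.3} --- it is cleaner to argue for $v_N$ and $w_N$ in their native time variables and use that ${\rm esssup}_{t\in[0,T)}\|u_N(t)\|_{1,2}=\max\{{\rm esssup}_t\|v_N(t)\|_{1,2},\,{\rm esssup}_t\|w_N(t)\|_{1,2}\}$, as you note at the outset.
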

\begin{proof}
Similarly to \cite{Gess}, using \eqref{pth_est} and \eqref{eq:apr:est1} we get
\begin{equation}\label{5.8}
    \E \|\d_x v_N(j\delta, \cdot)\|_2^p \leq e^{C_2 j \delta} \left(\|\d_x u_0\|_2^p + C_3 j \delta \left(\int_0^L u_0(x)\, dx \right)^p \right)
\end{equation}
\begin{equation}\label{5.9}
    \E \|\d_x w_N(j\delta, \cdot)\|_2^p \leq e^{C_2 j \delta} \left(\|\d_x u_0\|_2^p + C_3 j \delta \left(\int_0^L u_0(x)\, dx \right)^p \right)
\end{equation}
for $j = 0,...,N.$ Using \eqref{5.6}, \eqref{5.8}, \eqref{5.9} and Poincare inequality, we have
\begin{equation}\label{5.10}
\E \|v_N(j\delta, \cdot)\|_{1,2}^p + \E \|w_N(j\delta, \cdot)\|_{1,2}^p \leq C\|u_0\|_{1,2}^p, \ j = 0, ..., N. 
\end{equation}
for some $C>0$. Denote 
\[
Y(t):=\|v_N(t)\|_{1,2}^{p-2} \int_{v_N(t)>0} (v_N \d_x^3 v_N)^2\, dx.
\]
It follows from \eqref{pth_est}
\[
\int_0^\delta Y(t) \, dt \leq \|\d_x u_0\|_2^p \|\d_x v_N(\delta-0)\|_2^p
\]
\begin{eqnarray*}
    \nonumber & & \E \int_{\delta}^{2\delta} Y(t) \, dt \leq \E \|\d_x v_N(\delta + 0)\|_2^p -  \E  \|\d_x v_N(2 \delta - 0)\|_2^p \\
    \nonumber & & = \E \|\d_x w_N(\delta - 0)\|_2^p - \E  \|\d_x v_N(2 \delta - 0)\|_2^p \\
   & & \leq e^{C_2 \delta} \E  \|\d_x v_N(\delta - 0)\|_2^p + e^{C_2 \delta} C_3 \delta \E \left|\int_0^L v_N(\delta - 0) \, dx\right|^p - \E \|\d_x v_N(2 \delta - 0)\|_2^p
\end{eqnarray*}
This way 
\begin{multline}\label{5.11}
    \E \int_0^{2\delta} Y(t)\, dt \leq \|\d_x u_0\|_2^p + (e^{C_2\delta} - 1) \E \|\d_x v_N(\delta-0)\|_{1,2}^p + C_4\delta \|v_N(\delta-0)\|_{1,2}^p \\
    - \E \|\d_x v_N(2\delta-0)\|_2^p 
\end{multline}
for all $j= 0,...,N.$ It follows from \eqref{5.11} and \eqref{5.10} we have
\begin{equation}\label{5.12}
    \E \int_0^{2\delta} Y(t)\, dt \leq \|\d_x u_0\|_2^p + C(e^{C_2\delta} - 1)  \|u_0\|_{1,2}^p + C_5\delta \|u_0\|_{1,2}^p 
    - \E \|\d_x v_N(2\delta-0)\|_2^p.
\end{equation}

We continue in a similar way:
\begin{multline*}
    \E \int_{2\delta}^{3\delta} Y(t)\, dt \leq \E \|\d_x v_N(2 \delta + 0)\|_2^p - \E \|\d_x v_N(3 \delta - 0)\|_2^p 
    = \E \|\d_x w_N(2 \delta - 0)\|_2^p \\
    - \E \|\d_x v_N(3 \delta - 0)\|_2^p \leq e^{C_2\delta}\E \|\d_x v_N(2 \delta - 0)\|_2^p - \E \|\d_x v_N(3 \delta - 0)\|_2^p.
\end{multline*}
Thus
\begin{eqnarray*}
    \nonumber & & \E \int_0^{3\delta}Y(t)\, dt \leq \|\d_x u_0\|_2^p + C(e^{C_2\delta} - 1)  \|u_0\|_{1,2}^p + C_5\delta \|u_0\|_{1,2}^p \\
    \nonumber & & +(e^{C_2\delta} - 1) \E \|\d_x v_N(2\delta-0)\|_2^p +C_5\delta \|u_0\|_{1,2}^p - \E \|\d_x v_N(3 \delta - 0)\|_2^p \\
    \nonumber & & \leq \|\d_x u_0\|_2^p + C(e^{C_2\delta} - 1)  \|u_0\|_{1,2}^p +  C_5\delta \|u_0\|_{1,2}^p + C(e^{C_2\delta} - 1)  \|u_0\|_{1,2}^p
\end{eqnarray*}
Continuing the process on the intervals $[(j-1)\delta, j\delta]$, and using the inequality \eqref{eq:apr:est1}, Theorem \ref{Thm4.1} and Poincare inequality, we obtain \eqref{5.7}.
\end{proof}
\begin{proposition}\label{Prop5.2}
    For any $p \in [2,\infty), \ve>0$, $\kappa \in (2 \ve, 2/p) \cap (2\ve, 1/2]$ and $q \in (2/(\kappa-2\ve), \infty)$, there exists $C>0$ such that for all $N \in \N$ we have
    \[
    u_N \in L^p\left([0,1], \mathcal{F}, \lambda_{[0,1]}, B^{\frac{\kappa}{2}-\ve,q}\left([0,T], B_q^{\frac{1}{2}-2 \kappa, q}(\T)\right)\right)
    \]
    and
    \begin{equation}\label{5.12}
    \E \|u_N\|^p_{B^{\frac{\kappa}{2}-\ve,q}\left([0,T], B_q^{\frac{1}{2}-2 \kappa, q}(\T)\right)} \leq C \|u_0\|^p_{1,2} (1+ \|u_0\|^{kp}_{1,2})
    \end{equation}
\end{proposition}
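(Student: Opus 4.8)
The plan is to run, now at the level of the concatenated process $u_N$, the argument already carried out for Lemma~\ref{lem4.2} (itself an adaptation of Lemma~4.4 of \cite{Gess}), the essential difference being that the Besov seminorm in time over the whole interval $[0,T)$ does not split along the partition $\{j\delta\}$, so the per-subinterval bounds furnished by Theorems~\ref{Thm3.1} and~\ref{Thm4.1} have to be fed into \emph{global} regularity estimates rather than summed. Concretely, I would write $u_N = u_0 + V_N + Z_N$, up to the affine time changes in \eqref{5.3} (which only rescale constants), where $V_N$ collects the finite-variation increments of $u_N$ --- the deterministic flux $\d_x(v_N^2\d_x^3 v_N)+l(v_N)$ on the deterministic sub-slots and the Stratonovich drift $-\tfrac12\sum_k\lambda_k^2\d_x(\Psi_k\d_x(\Psi_k w_N))$ on the stochastic sub-slots --- while $Z_N$ collects the stochastic integrals $\sum_k\lambda_k\int\d_x(\Psi_k w_N)\,d\beta_N^k+\sum_k\gamma_k\int\Psi_k f(w_N)\,d\beta_{1,N}^k$ and is constant on the deterministic sub-slots. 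Then $Z_N$ is a martingale for the filtration generated by $(w_N,\overline{W}_N)$ from \eqref{5.1}, and $V_N$ is, on all of $[0,T)$, Lipschitz in time with values in $H^{-1}(\T)$, with Lipschitz constant equal to the $L^2(Q_T)$-norm of its total integrand. It then suffices to bound $V_N$ and $Z_N$ in $L^p\bigl([0,1];B^{\frac{\kappa}{2}-\ve,q}([0,T];B_q^{\frac12-2\kappa,q}(\T))\bigr)$ uniformly in $N$.

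For $V_N$, the $L^2(Q_T)$-norm of the integrand is controlled by $\sup_s\|v_N(s)\|_{L^\infty(\T)}\bigl(\int_0^T\!\int_{v_N>0}(v_N\d_x^3 v_N)^2\bigr)^{1/2}+C\sup_s\|v_N(s)\|_{L^\infty(\T)}^{r}+C\sup_s\|w_N(s)\|_{1,2}$; invoking $H^1(\T)\hookrightarrow L^\infty(\T)$, \eqref{1.3}, \eqref{1.5}, the estimate \eqref{5.7} (with a sufficiently large auxiliary integrability exponent) and H\"older's inequality, its $p$-th moment is $\le C$ times a fixed power of $\|u_0\|_{1,2}$. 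Thus $V_N$ is, in $L^p$, Lipschitz in time into $H^{-1}(\T)$ with that power of $\|u_0\|_{1,2}$ as Lipschitz constant, and bounded in $L^\infty([0,T];H^1(\T))$ by $C\|u_0\|_{1,2}$ via \eqref{5.7}. Real interpolation between these two bounds, followed by the space-variable Besov embedding that is admissible exactly in the stated ranges of $\kappa$ and $q$, produces a bound for $\|V_N\|_{B^{\frac{\kappa}{2}-\ve,q}([0,T];B_q^{\frac12-2\kappa,q}(\T))}$ which, raised to the $p$-th power, is $\le C\|u_0\|_{1,2}^p\bigl(1+\|u_0\|_{1,2}^{\kappa p}\bigr)$; it is precisely this interpolation --- in which the Lipschitz constant carries the extra power of $\sup_s\|v_N\|_{L^\infty}$ --- that accounts for the polynomial factor in Proposition~\ref{Prop5.2}.

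For $Z_N$, since it is a global martingale whose integrand $\Phi_N$ obeys $\|\Phi_N(s)\|_{L_2}^2=\sum_k\lambda_k^2\|\d_x(\Psi_k w_N(s))\|_2^2+\sum_k\gamma_k^2\|\Psi_k f(w_N(s))\|_2^2\le C\|w_N(s)\|_{1,2}^2$ by \eqref{1.3}, \eqref{1.5} and the linear growth of $f$, the factorization estimate used in the proof of Lemma~\ref{lem4.2} gives, for every $\alpha<\tfrac12$, $\E\|Z_N\|^q_{W^{\alpha,q}([0,T];L^2(\T))}\le C\,\E\int_0^T\|\Phi_N(s)\|_{L_2}^q\,ds\le C\,\E\sup_s\|w_N(s)\|_{1,2}^q\le C\|u_0\|_{1,2}^q$, where the last step uses \eqref{5.7} with exponent $q$. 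The further observation that the integrand of the $\beta_N^k$-part, $\d_x(\Psi_k w_N)$, lies one spatial derivative below $w_N$ permits trading the surplus time-regularity $\tfrac12-\tfrac{\kappa}{2}$ for the spatial regularity $\tfrac12-2\kappa$, landing $Z_N$ in $B^{\frac{\kappa}{2}-\ve,q}([0,T];B_q^{\frac12-2\kappa,q}(\T))$; this is the exact role of the interpolation step in Lemma~\ref{lem4.2}, and the constraints $\kappa\in(2\ve,2/p)$ and $q>2/(\kappa-2\ve)$ are those under which it, together with $p<q$ and the continuous embedding of Besov scales in the integrability exponent on $[0,1]$, is licit. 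Adding the bounds for $V_N$ and $Z_N$ yields the claim.

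The main obstacle, compared with the one-interval Lemma~\ref{lem4.2}, is the uniformity in $N$: one cannot sum the per-subinterval Besov seminorms (pairs of times lying in distinct sub-slots would be missed), and a naive triangle-inequality bound across the $O(1/\delta)$ pieces diverges. The resolution is to arrange the estimate so that Theorems~\ref{Thm3.1} and~\ref{Thm4.1} enter only through the $N$-uniform, pointwise-in-time bounds \eqref{5.10} and \eqref{5.7}, while all the time-regularity of $u_N$ is extracted once and for all from the global Lipschitz structure of $V_N$ and the global martingale structure of $Z_N$; verifying that the constants so obtained depend on $T$ but not on $N$ --- in particular that the factors $e^{C_2 j\delta}$ from \eqref{5.8} and \eqref{5.9} stay bounded by $e^{C_2T}$ --- is the technical core.
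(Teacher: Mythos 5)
Your route is genuinely different from the one the paper takes. The paper never forms a single global time-regularity estimate for $u_N$: it proves \emph{per-subinterval} Besov bounds for $v_N$ (via the H\"older-$\tfrac12$-into-$H^{-1}(\T)$ estimate around \eqref{5.14} and ``the lines of Lemma 4.3 of \cite{Gess}'') and for $w_N$ (Lemma \ref{lem5.4}), with the $p$-th powers of the norms summed over $j=1,\dots,N+1$, and then glues these into the global statement by invoking Proposition 4.2 of \cite{Gess}. Your ``main obstacle'' paragraph mischaracterizes that step: the gluing is not a triangle inequality over $O(1/\delta)$ pieces but a block decomposition of the double-difference integral defining the Besov seminorm, exploiting the continuity of $u_N$ at the junctions enforced by (DS) together with the summability in $j$ of the per-subinterval norms, and it does not lose a factor of $N$. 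Your alternative --- decompose $u_N$ globally into a finite-variation part $V_N$ and a martingale part $Z_N$, control $V_N$ in a H\"older space into $H^{-1}(\T)$ through the global dissipation integral in \eqref{5.7}, control $Z_N$ by the Flandoli--Gatarek factorization estimate, and interpolate both against the $L^\infty([0,T];H^1(\T))$ bound --- is a legitimate and arguably tidier way to the same conclusion, since it consumes exactly the same two inputs (the space-time integral of $(v_N\d_x^3v_N)^2$ and the martingale moment bounds) that feed the paper's per-subinterval lemmas, assembled once on $[0,T]$ rather than $N+1$ times; the price is that you must verify that $Z_N$, read in the concatenated time variable of \eqref{5.3}, really is a martingale for a single filtration on the iteratively built product space carrying $\overline{W}_N$, which the construction preceding \eqref{5.1} does supply.

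One statement needs correcting. $V_N$ is \emph{not} Lipschitz into $H^{-1}(\T)$: the deterministic flux $\chi_{\{v_N>0\}}v_N^2\d_x^3v_N$ is controlled only in $L^2$ in time, so the correct bound is $\|V_N(t)-V_N(s)\|_{H^{-1}(\T)}\le |t-s|^{1/2}\|g\|_{L^2(Q_T)}$, i.e.\ H\"older continuity of order $\tfrac12$ (this is precisely the estimate the paper derives for $v_N$); only the Stratonovich correction and the $l(v_N)$ term admit a pointwise-in-time $H^{-1}$ bound. The slip is harmless for the conclusion: interpolating $C^{1/2}([0,T];H^{-1}(\T))$ against $L^\infty([0,T];H^1(\T))$ with parameter $\theta=\tfrac34-\kappa$ yields time regularity $\tfrac18+\tfrac{\kappa}{2}>\tfrac{\kappa}{2}-\ve$ at spatial regularity $\tfrac12-2\kappa$, which is still more than Proposition \ref{Prop5.2} requires --- but the Lipschitz claim as written is false and should be replaced by the $C^{1/2}$ one before the interpolation step is carried out.
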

\begin{proof}In order to proceed with the proof of this proposition, we start with the corresponding results for $v_N$ and $w_N$.
\begin{lemma}
    For any $p \in [2,\infty), \ve>0$ and $q \in [p,\infty)$, there exists a constant $C>0$ such that for all $N \in \N$, $j \in \{1, ..., N+1\}$ and $\kappa \in (0, \frac{2}{p})$ we have
    \[
    v_N \in L^p\left([0,1], \mathcal{F}, \lambda_{[0,1]}, B_q^{\frac{\kappa}{2}-\ve,q}\left([(j-1)\delta, j \delta), B_q^{\frac{1}{2}-2 \kappa, q}(\T)\right)\right)
    \]
    and 
    \begin{equation}\label{5.13}
        \E \left(\sum_{j=1}^{N+1} \|v_N\|^p_{B_q^{\frac{\kappa}{2}-\ve,q}\left([(j-1)\delta,j \delta), B_q^{\frac{1}{2}-2 \kappa, q}(\T)\right)} \right) \leq \|u_0\|^{(\kappa+1)p}_{L^2}(1+\|u_0\|^{\lambda-2}_{L^2})^{\kappa p}
    \end{equation}
\end{lemma}
\begin{proof}
Using Definition \ref{Def3.1}, for $(j-1)\delta \leq t_1 \leq t_2 < j \delta$ we have
\[
(v_N(t_2, \cdot) - v_N(t_1, \cdot), \f)_2 = \int_{t_1}^{t_2} \int_{\{v_N(t,\cdot)>0\}} v_N^2(\d_x^3 v_N)(\d_x \f) dx dt + \int_{t_1}^{t_2} \int_0^L l(v_N) \f \, dx dt,
\]
where $\f \in C^\infty(\T)$. Then $v_N(t_2, \cdot) - v_N(t_1, \cdot)$ generates a linear continuous functional on $H^1(\T)$. Let us proceed with the estimate of its norm:
\[
\|v_N(t_2, \cdot) - v_N(t_1, \cdot)\|_{H^{-1}(\T)} \leq \int_{t_1}^{t_2} \left( \int_{\{v_N(t,\cdot)>0\}} v_N^4(\d_x^3 v_N)^2 dx +  \int_0^L l^2(v_N) \, dx \right)^{\frac{1}{2}} \, dt
\]
We proceed with the estimate of the last term:
\begin{multline}\label{5.14}
\int_{t_1}^{t_2}  \left(\int_0^L l^2(v_N) \, dx \right)^{\frac{1}{2}}
= \int_{t_1}^{t_2}  \left(\int_0^L v_N^{2\lambda} \, dx \right)^{\frac{1}{2}} \\
\leq C L^{\frac{1}{2}} {\rm esssup}_{t \in [(j-1) \delta, j \delta)} \sup_{x \in \T} |v_N(t,x)|^\lambda (t_2-t_1) = C_1 {\rm esssup}_{t \in [0,T)}\|v_N\|_{1,2}^\lambda (t_2 - t_1)
\end{multline}
where we used the Sobolev embedding. Then, using \eqref{5.7}, we have
\[
\|v_N\|_{L^2([0,1], \mathcal{F}, \lambda_{[0,1]}; C^{\frac{1}{2}}([(j-1)\delta, j \delta]; H^{-1}(\T)))} \leq C (\|u_0\|_{1,2}^2 + \|u_0\|_{1,2}^\lambda).
\]
The rest of the proof follows the lines of Lemma 4.3 from \cite{Gess}
\end{proof}
We now proceed with the corresponding result for $w_N$.
\begin{lemma}\label{lem5.4}
   For any $p \in [2,\infty), \ve>0$ and $q \in [p,\infty)$, there exists a constant $C>0$ such that for all $N \in \N$, $j \in \{1, ..., N+1\}$ and $\gamma \in (0,1)$ we have
    \[
    w_N \in L^p\left([0,1], \mathcal{F}, \lambda_{[0,1]}, B_q^{\frac{\gamma}{2}-\ve,q}\left([(j-1)\delta,j \delta), B_q^{\frac{1}{2}-2 \gamma, q}(\T)\right)\right)
    \]
    and 
    \begin{equation}\label{5.13}
        \E \left(\sum_{j=1}^{N+1} \|w_N\|^p_{B_q^{\frac{\gamma}{2}-\ve,q}\left([(j-1)\delta,j \delta), B_q^{\frac{1}{2}-2 \gamma, q}(\T)\right)} \right)^\frac{p}{q} \leq \|u_0\|_{1,2}^p
    \end{equation} 
\end{lemma}
The proof of Lemma \ref{lem5.4} is analogous to the proof of Lemma 4.2 \cite{Gess}, and hence the proof of Proposition \ref{Prop5.2} follows the lines of Proposition 4.2 \cite{Gess}. 
\subsection{Proof of the main result: Theorem \ref{Thm2.1}}
This subsection is devoted to the proof of Theorem \ref{Thm2.1}. While the proof is conceptually close to the proof of Theorem 1.2 \cite{Gess}, we are going to highlight the differences, induced by the presence of nonlinear terms $l(u)$ and $f(u)$.
\begin{proposition}\label{Prop5.4}
    Denote
    \[
\mathcal{X}_u:= BC^0([0,T] \times \T)
    \]
    \[
\mathcal{X}_J:= L^2([0,T] \times \T) \text{ (with weak topology) }
    \]
    \[
\mathcal{X}_W:= BC^0([0,T]; H^2(\T) \times H^2(\T))
    \]
Then there exist random variables $\tilde{u}$, $\tilde{u}_N:[0,1] \to \mathcal{X}_u$, $J_N,  J:[0,1] \to \mathcal{X}_J$ and $\toW'_N, \toW :[0,1] \to \mathcal{X}_W$ with $(\tilde{u}_N, \tilde{J}_N, \toW'_N) \sim (u_N, J_N, \overline{W}_N)$, where $J_N:= \chi_{v_N>0} v_N^2(\d_x^3 v_N)$. Furthermore, there are subsequences (still indexed with $N$), such that $\tilde{u}_N(\omega) \to \tilde{u}(\omega)$ in  $\mathcal{X}_u$, $\tilde{J}_N(\omega) \rightharpoonup \tilde{J}(\omega)$ in $\mathcal{X}_J$ and $\toW'_N(\omega) \rightharpoonup \toW(\omega)$ in $\mathcal{X}_W$ for every $\omega \in [0,1]$ as $N \to \infty$. 
\end{proposition}
\begin{proof}
Once we introduce
\[
\overline{W}_N^0(t,\cdot):= 
\begin{cases}
\overline{W}_N((j-1) \delta, \cdot) \ \text{ for } t \in [(j-1)\delta, (j - \frac{1}{2})\delta)\\
\overline{W}_N(2t - j \delta, \cdot) \ \text{ for } t \in [(j-\frac{1}{2})\delta, j \delta)
\end{cases}
\]
and 
\[
\toW_N^0(t,\cdot):= 
\begin{cases}
\toW'_N((j-1) \delta, \cdot) \ \text{ for } t \in [(j-1)\delta, (j - \frac{1}{2})\delta)\\
\toW'_N(2t - j \delta, \cdot) \ \text{ for } t \in [(j-\frac{1}{2})\delta, j \delta)
\end{cases}
\]
the rest of the proof becomes analogous to the proof of Proposition 5.2 \cite{Gess}. 
\end{proof}
In a similar way we get the analog of Proposition 5.3 \cite{Gess}. In particular, the analogs of (5.4a)-(5.4c) in \cite{Gess} in our case are
\begin{eqnarray*}
   & & \beta_N^k(t) = \lambda_k^{-1}(W_N^0(t), \Psi_k)_{2,2} \\
   & & \beta_{1,N}^k(t) = \gamma_k^{-1}(W_{1,N}^0(t), \Psi_k)_{2,2} \\
   & & \tilde{\beta}_N^k(t) = \lambda_k^{-1}(\tilde{W}_N(t), \Psi_k)_{2,2} \\
   & & \tilde{\beta}_{1,N}^k(t) = \gamma_k^{-1}(\tilde{W}_{1,N}(t), \Psi_k)_{2,2} \\
   & & \tilde{\beta}^k(t) = \lambda_k^{-1}(\tilde{W}(t), \Psi_k)_{2,2} \\
   & & \tilde{\beta_1}(t) = \gamma_k^{-1}(\tilde{W_1}(t), \Psi_k)_{2,2}
\end{eqnarray*}
\begin{corollary}\label{Cor:5.5}
    For $\tilde{u}_N$, $\tilde{v}_N$, $\tilde{w}_N$ and $\tilde{u}$ in Proposition \ref{Prop5.4} we have
    \begin{eqnarray*}
   & & \|\tilde{u}_N - u\|_{BC^0([0,T] \times \T)} \to 0;\\
   & & \|\tilde{v}_N - u\|_{L^\infty([0,T] \times \T)} \to 0;\\
   & & \|\tilde{w}_N - \tilde{u}\|_{L^\infty([0,T] \times \T)} \to 0
\end{eqnarray*}
as $N \to \infty$ $\lambda_{[0,1]}$ almost surely.
\end{corollary}
The proof of this corollary is identical to Corrollary 5.4 \cite{Gess}. Furthermore, we have the analog of Proposition 5.5 \cite{Gess}:
\begin{proposition}\label{Prop5.6}
    Let $\tilde{u}_N$ and $\tilde{u}$ be given in Proposition \ref{Prop5.4}. Then there are subsequences of $\tilde{u}_N$, $\tilde{v}_N$ and $\tilde{w}_N$, still indexed with $N$, such that for any $p \in [2,\infty)$ we have $\tilde{u}_N \rightharpoonup^* \tilde{u}$, $\tilde{v}_N \rightharpoonup^* \tilde{u}$ and $\tilde{w}_N \rightharpoonup^* \tilde{u}$ in $L^p([0,1], L^\infty([0,T], H^1(\T))$ as $N \to \infty$. Furthermore, 
    \begin{equation}\label{5.20}
\E {\rm esssup}_{t \in [0,T)} \|\tilde{u}(t)\|_{1,2}^p  \leq C \|u_0\|_{1,2}^p
    \end{equation}
    for some positive constant $C$ independent of $\tilde{u}$ and $u_0$. 
\end{proposition}
Hence $\tilde{u}$ is a bounded continuous $H_w^1(\T)$ - valued process.  
\begin{proposition}\label{Prop5.7}
    Let $\tilde{u}_N, \tilde{u}, \tilde{J}_N$ and $\tilde{J}$ be as in Proposition \ref{Prop5.4}. Then the distributional derivative $\d_x^3 \tilde{u}$ satisfies $\d_x^3 \tilde{u} \in L^2(\{\tilde{u} > r\})$ for any $r>0$. Furthermore, $\tilde{J}_N = \chi_{\tilde{v}_N>0} \tilde{v}_N^2 (\d_x^3 \tilde{v}_N)$ and $\tilde{J} = \chi_{\tilde{u}>0} \tilde{u}_N^2 (\d_x^3 \tilde{})$ $\lambda_{[0,1]}$ almost surely. 
\end{proposition}
We may proceed with the scheme \eqref{DynamicsD} - \eqref{DynamicsS} to conclude that for $t \in [0,T)$ and $\delta = \frac{T}{N+1}$ 
\begin{eqnarray*}
    & & (v_N(t), \f)_2 - (u_0, \f)_2 = (v_N(t),\f)_2 + \sum_{j=1}^{\left[\frac{t}{\delta}\right]} \left( -(v_N(j\delta, \cdot), \f)_2 + \lim_{s \to j\delta} (w_N(s, \cdot), \f)_2 \right) \\
    & & + \sum_{j=1}^{\left[\frac{t}{\delta}\right]}(\lim_{s \to j \delta}(v_N(s, \cdot), \f)_2 - (w_N((j-1) \delta, \cdot), \f)_2 - (v_N(0, \cdot), \f)_2 \\
    & & = (v_N(t), \f)_2  - \left(v_N \left(\left[\frac{t}{\delta}\right] \delta , \cdot\right),\f \right)_2 + \sum_{j=1}^{\left[\frac{t}{\delta}\right]}(\lim_{s \to j \delta}(v_N(s, \cdot), \f)_2 - (v_N(j-1)\delta, \cdot), \f)_2\\
    & & + \sum_{j=1}^{\left[\frac{t}{\delta}\right]}(\lim_{s \to j \delta}(w_N(s, \cdot), \f)_2 - (w_N(j-1)\delta, \cdot), \f)_2 = \int_0^t \int_{\{v_N(s,\cdot)>0\}} v_N^2(\d_x^3 v_N)(\d_x \f) \, dx ds \\
    & & -\frac{1}{2} \sum_{k \in \Z} \lambda_k^2 \int_0^{\left[\frac{t}{\delta}\right] \delta} (\Psi_k \d_x (\Psi_k w_N(s, \cdot)), \d_x \f)_2 \, ds + \int_0^t (l(v_N),\f)_2 \, ds \\
    & & - \sum_{k \in \Z} \lambda_k \int_0^{\left[\frac{t}{\delta}\right] \delta} (\Psi_k w_N(s, \cdot), \d_x \f)_2 \, d \beta_N^k(s) + \sum_{k \in \Z} \gamma_k \int_0^{\left[\frac{t}{\delta}\right] \delta} (\Psi_k f(w_N(s,\cdot)), \f)_2 \, d \beta_{1,N}^k(s) 
\end{eqnarray*}
for all $\f \in C^\infty(\T)$. Changing the stochastic basis to 
\[
([0,1], \mathcal{F}, (\mathcal{F})_{t \in [0,T]}, \lambda_{[0,1]})
\]
for $\tilde{u}_N, \tilde{v}_N$ and $\tilde{w}_N$ we obtain
\begin{eqnarray}\label{5.21}
    & & (\tilde{v}_N(t,\cdot), \f)_2 - (u_0, \f)_2 =  \int_0^t \int_{\{\tilde{v}_N(s,\cdot)>0\}} \tilde{v}_N^2(\d_x^3 \tilde{v}_N)(\d_x \f) \, dx ds \\
    \nonumber & &  -\frac{1}{2} \sum_{k \in \Z} \lambda_k^2 \int_0^{\left[\frac{t}{\delta}\right] \delta} (\Psi_k \d_x (\Psi_k \tilde{w}_N(s, \cdot)), \d_x \f)_2 \, ds + \int_0^t (l(\tilde{v}_N),\f)_2 \, ds \\
    \nonumber & & - \sum_{k \in \Z} \lambda_k \int_0^{\left[\frac{t}{\delta}\right] \delta} (\Psi_k \tilde{w}_N(s, \cdot), \d_x \f)_2 \, d \tilde{\beta}_N^k(s) + \sum_{k \in \Z} \gamma_k \int_0^{\left[\frac{t}{\delta}\right] \delta} (\Psi_k f(\tilde{u}_N(s,\cdot)), \f)_2 \, d \tilde{\beta}_{1,N}^k(s) 
\end{eqnarray}
\begin{lemma}\label{Lem5.8}
    Suppose $\tilde{u}_N, \tilde{v}_N, \tilde{w}_N, \tilde{u}, \tilde{v}, \tilde{w}$ are given in Propositions \ref{Prop5.4} and \ref{Prop5.6}. Then for any $\f \in C^\infty(\T)$ and $t \in [0,T)$, there are subsequences, still denoted with $N$, such that
    \begin{equation}\label{5.22}
        (\tilde{v}_N(t, \cdot), \f)_2 \to (\tilde{u}(t, \cdot), \f)_2,
    \end{equation}
    \begin{equation}\label{5.23}
    \int_0^t \int_{\{\tilde{v}_N(s,\cdot)>0\}} \tilde{v}_N^2(\d_x^3 \tilde{v}_N)(\d_x \f) \, dx ds \to \int_0^t \int_{\{\tilde{u}(s,\cdot)>0\}} \tilde{u}^2(\d_x^3 \tilde{u})(\d_x \f) \, dx ds
    \end{equation}
    \begin{equation}\label{5.24}
    \sum_{k \in \Z} \lambda_k^2 \int_0^{\left[\frac{t}{\delta}\right] \delta} (\Psi_k \d_x (\Psi_k \tilde{w}_N(s, \cdot)), \d_x \f)_2 \, ds \to  \sum_{k \in \Z} \lambda_k^2 \int_0^{\left[\frac{t}{\delta}\right] \delta} (\Psi_k \d_x (\Psi_k \tilde{u}(s, \cdot)), \d_x \f)_2 \, ds
    \end{equation}
     \begin{equation}\label{5.25}
\int_0^t (l(\tilde{v}_N),\f)\, ds \to \int_0^t (l(\tilde{u}),\f)\, ds
     \end{equation}
     \begin{multline}\label{5.26}
         - \sum_{k \in \Z} \lambda_k \int_0^{\left[\frac{t}{\delta}\right] \delta} (\Psi_k \tilde{u}_N(s, \cdot), \d_x \f)_2 \, d \tilde{\beta}_N^k(s) + \sum_{k \in \Z} \gamma_k \int_0^{\left[\frac{t}{\delta}\right] \delta} (\Psi_k f(\tilde{u}_N(s,\cdot)), \f)_2 \, d \tilde{\beta}_{1,N}^k(s) \to \\
         - \sum_{k \in \Z} \lambda_k \int_0^{t} (\Psi_k \tilde{u}(s, \cdot), \d_x \f)_2 \, d \tilde{\beta}^k(s) + \sum_{k \in \Z} \gamma_k \int_0^{t} (\Psi_k f(\tilde{u}(s,\cdot)), \f)_2 \, d \tilde{\beta}_{1}^k(s)
     \end{multline}
\end{lemma}
\begin{proof}
The convergences \eqref{5.22} - \eqref{5.24} can be established the same way as in Lemma 5.7 \cite{Gess}. Furthermore, the continuity of $l(u)$ implies $(l(\tilde{v}_N),\f)_2 \to (l(\tilde{u}),\f)_2$ as $N \to \infty$ almost surely. Thus
\begin{equation*}
   \E \int_0^t \int_0^L ( (l(\tilde{v}_N) - l(\tilde{u})))^2 \, dx ds \leq 2 \E \int_0^t \int_0^L l^2(\tilde{v}_N) \f^2  \, dx ds + 2 \E \int_0^t \int_0^L l^2(\tilde{u}) \f^2  \, dx ds := J_1 + J_2.
\end{equation*}
Using \eqref{5.7} and Sobolev embedding, we have
\[
J_1 \leq C_1 \E {\rm esssup }  \sup_{x \in \T} |l^2(\tilde{v}_N(t,x))| \leq C_2 \E {\rm esssup } \|v_N\|_{1,2}^{2 \lambda} \leq C_3 \|u_0\|_{1,2}^{2 \lambda}.
\]
In view of Proposition \ref{Prop5.6}, the same estimate holds for $J_2$ as well. Thus, by Vitali's convergences theorem, we have \eqref{5.25}. Finally, the convergence \eqref{5.26} can be shown the same way as the convergence of the last two terms in \eqref{4.60} in the proof of Theorem \ref{Thm4.1}. Hence the proof of Lemma \ref{Lem5.8} follows. 
\end{proof}
We may now use Lemma \ref{Lem5.8} to pass to the limit as $N \to \infty$ in \eqref{5.21}, which concludes the proof of Theorem \ref{Thm2.1}.
\end{proof}
\section*{Acknowledgments}  The research of Oleksandr Misiats was supported by Simons Collaboration
Grant for Mathematicians No. 854856. The work of Oleksandr Stanzhytskyi and Oleksiy Kapustyan was partially supported by the National Research Foundation of Ukraine No. F81/41743 and Ukrainian Government Scientific Research Grant No. 210BF38-01.


\begin{thebibliography}{10}

\bibitem{Amann}
H.~Amann.
\newblock Compact embeddings of vector-valued sobolev and besov spaces.
\newblock {\em Glasnik Matematicki}, 35(55), 2000.

\bibitem{DalPas}
E.~Beretta, M.~Bertsch, and R.~Dal~Passo.
\newblock Nonnegative solutions of a fourth-order nonlinear degenerate
  parabolic equation.
\newblock {\em Arch. Rational Mech. Anal.}, 129(2):175--200, 1995.

\bibitem{5}
J.~Bergh and J.~Lofstrom.
\newblock {\em Interpolation spaces. An introduction}, volume~18.
\newblock North Holland Publishing Co., 1978.

\bibitem{Bernis}
F.~Bernis.
\newblock Finite speed of propagation for thin viscous flows.
\newblock {\em C. R. Acad. Sci. Paris. I Math.}, 322(12):1169--1174,
  1996.

\bibitem{Bern}
F.~Bernis and A.~Friedman.
\newblock Higher order nonlinear degenerate parabolic equations.
\newblock {\em J. Differ. Eqn.}, 83(1), 1990.

\bibitem{Bert}
A.~Bertozzi and M.~Pugh.
\newblock The lubrication approximation for thin viscous films: regularity and
  long-time behavior of weak solutions.
\newblock {\em Comm. Pure Appl. Math.}, 49(2):85--123, 1996.

\bibitem{MisStaCla}
J.~Clark, O.~Misiats, V.~Mogylova, and O.~Stanzhytskyi.
\newblock Asymptotic behavior of stochastic functional differential evolution
  equation.
\newblock {\em Electron. J. Differential Equations}, pages Paper No. 35, 21,
  2023.

\bibitem{13}
F.~Cornalba.
\newblock A priori positivity of solutions to a non-conservative stochastic
  thin film equation.
\newblock {\em ArXiv: 1811.07826}, 2018.

\bibitem{Gess2}
K.~Dareiotis, B.~Gess, M.~Gnann, and G.~Gr\"{u}n.
\newblock Non-negative {M}artingale solutions to the stochastic thin-film
  equation with nonlinear gradient noise.
\newblock {\em Arch. Ration. Mech. Anal.}, 242(1):179--234, 2021.

\bibitem{17}
B.~Davidovich, E.~Moro, and H~Slone.
\newblock Spreading of viscous fluid drops on a solid substrate assisted by
  thermal fluctuations.
\newblock {\em Phys. Rev. Let.}, 95, 2005.

\bibitem{19}
J.~Fischer and G.~Gr\"{u}n.
\newblock Existence of positive solutions to stochastic thin-film equations.
\newblock {\em SIAM J. Math. Anal.}, 50(1):411--455, 2018.

\bibitem{20}
F.~Flandole and D.~Catarek.
\newblock Martingale and stationary solutions for stohasti navier-stokes
  equation.
\newblock {\em Probability Theory and Related Fields}, 102(3):367--391, 1995.

\bibitem{Gess}
B.~Gess and M.~Gnann.
\newblock The stochastic thin-film equation: existence of nonnegative
  martingale solutions.
\newblock {\em Stochastic Process. Appl.}, 130(12):7260--7302, 2020.

\bibitem{33}
G.~Gr\"{u}n, K.~Mecke, and M.~Rauscher.
\newblock Thin-film flow influenced by thermal noise.
\newblock {\em J. Stat. Phys.}, 122(6):1261--1291, 2006.

\bibitem{MisStaHie}
M.~Hieber, O.~Misiats, and O.~Stanzhytskyi.
\newblock On the bidomain equations driven by stochastic forces.
\newblock {\em Discrete Contin. Dyn. Syst.}, 40(11):6159--6177, 2020.

\bibitem{35}
M.~Hofmanova.
\newblock Degenerate parabolic stochastic partial differential equations.
\newblock {\em Stochastic Processes and Applications}, 23(12):4294--4336, 2013.

\bibitem{Jakub}
A.~Jakubovski.
\newblock The almost sure skorokhod representation for subsequences in
  nonmetric spaces.
\newblock {\em Teor. Veroyatnost. i Primenen.}, 42(1):209--216, 1997.

\bibitem{KapTar}
O.~Kapustyan, P.~Kasyanov, and Taranets.R.
\newblock Strong solutions and trajectory attractors to the thin-film equation
  with absorption.
\newblock {\em J. Math. Anal. Appl.}, 493, 2021.

\bibitem{MisStaKap}
O.~Kapustyan, O.~Misiats, and O.~Stanzhytskyi.
\newblock Strong solutions and asymptotic behavior of bidomain equations with
  random noise.
\newblock {\em Stoch. Dyn.}, 22(6):Paper No. 2250027, 28, 2022.

\bibitem{LiuRoch}
W.~Liu and M.~Rochner.
\newblock {\em Stochastic Partial Differential Equations: An Introduction}.
\newblock Springer, 2015.

\bibitem{ManZau}
R.~Manthey and Th. Zausinger.
\newblock Stochastic evolution equations in {$L^{2\nu}_\rho$}.
\newblock {\em Stochastics Stochastics Rep.}, 66(1-2):37--85, 1999.

\bibitem{MisStaTop}
O.~Misiats, O.~Stanzhytskyi, and I.~Topaloglu.
\newblock On global existence and blowup of solutions of stochastic
  {K}eller-{S}egel type equation.
\newblock {\em NoDEA Nonlinear Differential Equations Appl.}, 29(1), 2022.

\bibitem{MisStaYip1}
O.~Misiats, O.~Stanzhytskyi, and N.~K. Yip.
\newblock Existence and uniqueness of invariant measures for stochastic
  reaction-diffusion equations in unbounded domains.
\newblock {\em J. Theoret. Probab.}, 29(3):996--1026, 2016.

\bibitem{MisStaYip3}
O.~Misiats, O.~Stanzhytskyi, and N.~K. Yip.
\newblock Asymptotic analysis and homogenization of invariant measures.
\newblock {\em Stoch. Dyn.}, 19(2):1950015, 27, 2019.

\bibitem{MisStaYip2}
O.~Misiats, O.~Stanzhytskyi, and N.~K. Yip.
\newblock Invariant measures for stochastic reaction-diffusion equations with
  weakly dissipative nonlinearities.
\newblock {\em Stochastics}, 92(8):1197--1222, 2020.

\bibitem{Sta}
M.~Rosenzweig and G.~Staffilani.
\newblock Global solutions of aggregation equations and other flows with random
  diffusion.
\newblock {\em Probability Theory and Related Fields}, 185:1219--1262, 2023.

\bibitem{Skor}
A.V. Skorokhod.
\newblock {\em Random Linear Operators}.
\newblock Reidel, 1964.

\bibitem{MisStaSta}
A.~Stanzhytsky, O.~Misiats, and O.~Stanzhytskyi.
\newblock Invariant measure for neutral stochastic functional differential
  equations with non-{L}ipschitz coefficients.
\newblock {\em Evol. Equ. Control Theory}, 11(6):1929--1953, 2022.

\bibitem{27}
A.~Taranets and A.~Shishkov.
\newblock Effect of time delay of support propagation in equations of thin
  film.
\newblock {\em Ukr. Math. J.}, 55(7):1131--1152, 2003.

\bibitem{56}
H.~Triebel.
\newblock {\em Interpolation theory, function spaces, differential operators}.
\newblock Springer-Verlag, 1976.

\end{thebibliography}
\end{document}